\newtheorem{thm}{Theorem}
\newtheorem*{thmA}{Theorem~A}
\newtheorem*{thmB}{Theorem~B}
\newtheorem*{thmB'}{Theorem~B$'$}
\newtheorem*{thmC}{Theorem~C}
\newtheorem{cor}{Corollary}
\newtheorem{lem}{Lemma}
\newtheorem*{lemA}{Lemma~A}
\newtheorem{rmk}{Remark}
\newtheorem*{conjB}{Bank's Conjecture}
\begin{document}




\title{Hypertranscendency of Perturbations of Hypertranscendental Functions}


\author{Jiaxing Huang}
\address{Department of Mathematics, The University of Hong Kong, 
Pokfulam Road, Hong Kong}
\email{hjxmath@gmail.com}


\author{Tuen-Wai Ng}
\address{Department of Mathematics, The University of Hong Kong, 
Pokfulam Road, Hong Kong}
\email{ntw@maths.hku.hk}

\begin{abstract}
Inspired by the work of Bank on the hypertranscendence of $\Gamma e^h$ where $\Gamma$ is the Euler gamma function and $h$ is an entire function,
 we investigate when a meromorphic function $fe^g$ cannot satisfy any algebraic differential equation over certain field of meromorphic functions, 
 where $f$ and $g$ are meromorphic  and entire on the complex plane, respectively. Our results (Theorem \ref{cor:Bank} and \ref{cor:Bank2}) give  partial solutions to Bank's Conjecture (1977) on the hypertranscendence of $\Gamma e^h$. We also give some sufficient conditions for hypertranscendence of meromorphic function of the form $f+g$, $f\cdot g$ and $f\circ g$ in Theorem \ref{thm:fg} and \ref{thm:h}.

\end{abstract}









\maketitle

\section{Introduction and main results}\label{sec:I}

A meromorphic function $f$ on the complex plane is said to be \emph{hypertranscendental} over a field $\mathcal{K}$ of meromorphic functions, if $f$ does not satisfy any nontrivial algebraic differential equation whose coefficients are in the field $\mathcal{K}$. 
We are interested in those $\mathcal{K}$ which are related to the growth of $f$. Let $T(r, f)$ be the Nevanlinna characteristic function of $f$ (see Section \ref{sec:N} for the definitions and notations in Nevanlinna theory). 
We denote by $S(r, f)$ any quantity  which is of growth $o(T(r, f))$ as $r\rightarrow\infty$ outside a set of finite measure $E\subset(0, \infty)$.
By $\mathcal{M}_0$ we mean the field of meromorphic functions $y$ with $T(r, y)=o(r)$ as $r\rightarrow\infty$ outside a set of finite measure 
and $\mathcal{S}_f$ (resp. $\mathcal{S}^f$) the field of meromorphic functions $y$ satisfying the growth condition $T(r, y)=S(r, f)$ (resp. $T(r, y)=O(T(r, f))$ as $r\rightarrow\infty$ outside a set of finite measure).\\
 
 

In 1887, H\"{o}lder \cite{Holder87} established the hypertranscendence of the Euler gamma function $\Gamma$ over the field of rational functions, i.e., $\Gamma$ cannot satisfy any nontrivial algebraic differential equation whose coefficients are rational functions.
Hilbert \cite{Hilbert01}, in 1901, proved the hypertranscendence of Riemann zeta function using the functional equation of $\zeta$ and $\Gamma$. 
 In 1976,  Bank and Kaufman \cite{Bank76} extended the famous theorems of  H\"{o}lder and Hilbert by showing that $\Gamma$ and $\zeta$ are hypertranscendental over the field $\mathcal{M}_0$. 
One year later, Bank \cite{Bank77} asked to what extend the hypertranscendence of $\Gamma$ is due to the nature of its poles and zeros. In particular, he posed the following conjecture.

\begin{conjB}[\cite{Bank77}]\label{conj:Bank}
For every entire function $h$,
$\Gamma e^h$ is hypertranscendental over $\mathcal{M}_0$.
\end{conjB}

Bank \cite{Bank77} gave an affirmative answer to the above conjecture when either $h$ or $h'$ has only finitely many zeros. In 1980, he \cite{Bank80} generalized this result to the following.

 \begin{thmA}[\cite{Bank80}]\label{thm:A}
 Let $h$ be an entire function with the property that for some nonnegative integer $j$, 
and some complex number $a$, the following condition holds : 
\begin{equation}\label{eqn:B}
\overline N \big (r, 1/(h^{(j)} - a)\big ) = S(r, h^{(j)}),
\end{equation}
where as usual, $h^{(0)}$ denotes $h$. Then the function $\Gamma e^{h}$ is hypertranscendental over $\mathcal{M}_0$. 
 \end{thmA}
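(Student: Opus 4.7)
Assume for contradiction that $F := \Gamma e^h$ satisfies $P(F, F', \ldots, F^{(n)}) = 0$ for some nontrivial polynomial $P$ with coefficients in $\mathcal{M}_0$. The plan is to re-express this relation as an identity in which $e^h$ appears only through its integer powers, apply a Borel-type separation to force the coefficients to vanish, and then invoke the Bank--Kaufman hypertranscendence of $\Gamma$ over $\mathcal{M}_0$. The hypothesis \eqref{eqn:B} will enter twice, supplying the Nevanlinna growth estimates needed for the separation step and again for the final reduction.

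Expanding each derivative of $F$ by the Leibniz rule gives $F^{(k)} = e^h Q_k$, where $Q_k$ is a polynomial with integer coefficients in $\Gamma, \Gamma', \ldots, \Gamma^{(k)}$ and $h', \ldots, h^{(k)}$, with $Q_k = \Gamma (h')^k + \cdots$ (lower order in $h'$). Substituting into $P$ and collecting monomials by the total $F$-degree produces an identity
\[
\sum_{k=0}^{d} e^{k h}\, R_k \;=\; 0,
\]
where $d$ is the total degree of $P$ and each $R_k$ is a differential polynomial in $\Gamma$ and $h$ with coefficients in $\mathcal{M}_0$. A comparison of leading $h'$-degrees shows that $R_d \not\equiv 0$ as a formal polynomial.

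The main step is to conclude $R_k \equiv 0$ for every $k$ --- a Borel-type separation for the distinct exponentials $e^{kh}$. This works once one establishes $T(r, R_k) = S(r, e^h)$. Here the hypothesis \eqref{eqn:B} is essential: via Nevanlinna's second fundamental theorem applied to $h^{(j)}$ together with the lemma on the logarithmic derivative, one aims for a uniform bound on each $T(r, h^{(i)})$ in terms of $T(r, e^h)$, which, after absorbing the contribution of $\Gamma, \Gamma', \ldots$ (whose characteristic $T(r, \Gamma) \sim r \log r/\pi$ is not negligible), should just suffice to apply the separation. I expect this balance to be the main technical obstacle, since one must argue separately according to whether $e^h$ dominates $\Gamma$ in characteristic growth.

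Once every $R_k$ vanishes identically, $R_d = 0$ is a nontrivial polynomial identity in the $\Gamma^{(i)}$ with coefficients in $\mathcal{M}_0[h', h'', \ldots]$. Dividing by a fixed leading $\Gamma$-monomial yields an algebraic differential equation for $\Gamma$ whose coefficients lie in the field $\mathcal{M}_0(h', h'', \ldots)$. A second invocation of \eqref{eqn:B} should place these coefficients in a sub-field to which the Bank--Kaufman hypertranscendence of $\Gamma$ over $\mathcal{M}_0$ applies, forcing the equation to be trivial and contradicting $R_d \not\equiv 0$.
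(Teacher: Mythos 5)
Your approach (expand $F^{(k)}=e^hQ_k$, collect into $\sum_k e^{kh}R_k=0$, separate the exponentials, then read off an ADE for $\Gamma$) is genuinely different from the one in the paper.  Bank's argument, as sketched in Section~\ref{sec:I}, never separates powers of $e^h$: it applies Lemma~A to the logarithmic derivative of $\Gamma e^h$ together with $T(r,\Gamma'/\Gamma)=r+o(r)$ to force $T(r,h^{(j)})=O(r)$, then uses the value-distribution hypothesis~\eqref{eqn:B} to conclude $h^{(j+1)}/h^{(j)}\in\mathcal{M}_0$, whence $h$, $h'$, and finally $e^h$ all lie in $A(\mathcal{M}_0)$; the contradiction is then immediate because $A(\mathcal{M}_0)$ is a field and $\Gamma=(\Gamma e^h)/e^h$.

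Your proposal, as it stands, has two real gaps.  First, the Borel-type separation needs $T(r,R_k)=S(r,e^h)$, but each $R_k$ carries $\Gamma,\Gamma',\dots$ with $T(r,\Gamma)\sim r\log r/\pi$.  If, say, $h$ is linear (which satisfies~\eqref{eqn:B} trivially with $j=1$), then $T(r,e^h)=O(r)=o(T(r,\Gamma))$ and the separation simply cannot start; you flag this as the main technical obstacle but do not resolve it, and the theorem is claimed precisely in this generality.  Second, even granting $R_d\equiv 0$, that is an identity in $\Gamma,\dots,\Gamma^{(n)}$ whose coefficients lie in $\mathcal{M}_0[h',\dots,h^{(n)}]$, not $\mathcal{M}_0$.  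Hypothesis~\eqref{eqn:B} concerns the distribution of $a$-points of $h^{(j)}$, not its growth; it does not by itself place $h',\dots,h^{(n)}$ inside $\mathcal{M}_0$ or any field over which Bank--Kaufman gives hypertranscendence of $\Gamma$.  In Bank's argument the growth control on $h^{(j)}$ is \emph{not} extracted from~\eqref{eqn:B} at all --- it comes from the ADE hypothesis on $\Gamma e^h$ via Lemma~A, and \eqref{eqn:B} only converts that growth bound into $h^{(j+1)}/h^{(j)}\in\mathcal{M}_0$.  Without importing some such mechanism, the last step of your plan has no contradiction to reach.
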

 
Related to Theorem A, we obtained the following. 

\begin{thm}\label{cor:Bank} Let $h$ be an entire function such that $T(r, \Gamma'/\Gamma)=S(r, h^{(j)})$ and 
\begin{equation}\label{eqn:hj}
\delta(a, h^{(j)})>0,
\end{equation}  for some $a\in\mathcal{M}_0$ and some nonnegative integer $j$. Then
$\Gamma e^h$ is hypertranscendental over $\mathcal{M}_0$.
\end{thm}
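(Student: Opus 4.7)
The plan is to argue by contradiction: suppose $F := \Gamma e^h$ satisfies a nontrivial algebraic differential equation $P(z, F, F', \ldots, F^{(n)}) = 0$ with coefficients in $\mathcal{M}_0$. Write $\psi := \Gamma'/\Gamma$ and $\omega := F'/F = \psi + h'$. Via the iterated logarithmic derivative identities $F^{(k)}/F = P_k(\omega, \omega', \ldots, \omega^{(k-1)})$, where each $P_k$ is a universal polynomial with non-negative integer coefficients, the equation can be rewritten, after collecting powers of $F$, as
\[
\sum_{d=0}^{N} F^d\, R_d\bigl(z, \omega, \omega', \ldots\bigr) = 0,
\]
where each $R_d$ is a polynomial with coefficients in $\mathcal{M}_0$.

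From this, I would show that $\omega$ itself satisfies a nontrivial ADE over $\mathcal{M}_0$. If $F$ is transcendental over the differential field $\mathcal{M}_0\langle \omega\rangle$, then each $R_d \equiv 0$ as a differential polynomial in $\omega$, and nontriviality of $P$ forces at least one $R_d$ to be a nontrivial polynomial relation satisfied by $\omega$. Otherwise, $F$ is algebraic over $\mathcal{M}_0\langle \omega\rangle$; since the existence of $P$ makes the differential field $\mathcal{M}_0\langle F\rangle$ of finite transcendence degree over $\mathcal{M}_0$, and $\omega \in \mathcal{M}_0\langle F\rangle$, the iterated derivatives $\omega, \omega', \omega'', \ldots$ are algebraically dependent over $\mathcal{M}_0$, yielding an ADE for $\omega$.

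Substituting $\omega = \psi + h'$ and differentiating $j-1$ times, the ADE for $\omega$ becomes a nontrivial ADE
\[
Q\bigl(z, h^{(j)}, h^{(j+1)}, \ldots, h^{(j+m)}\bigr) = 0
\]
for $h^{(j)}$ with coefficients in the differential field $\mathcal{K} := \mathcal{M}_0\langle \psi\rangle$. By the hypothesis $T(r, \psi) = S(r, h^{(j)})$ together with the lemma on logarithmic derivatives, every element of $\mathcal{K}$ has Nevanlinna characteristic $S(r, h^{(j)})$; in particular every coefficient of $Q$ is a small function relative to $h^{(j)}$.

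The final step is to exploit the deficiency hypothesis $\delta(a, h^{(j)}) > 0$, with $a \in \mathcal{M}_0 \subset \mathcal{K}$, to contradict the existence of such a $Q$. I would set $g := h^{(j)} - a$, so that $g$ satisfies a nontrivial ADE over $\mathcal{K}$ with coefficients of characteristic $S(r, g)$ and $\delta(0, g) > 0$, and then apply a Clunie/Hayman-type proximity lemma, or the Second Main Theorem with slowly moving targets, to force a structural degeneracy incompatible with $T(r, g) \asymp T(r, h^{(j)}) \gg T(r, \psi)$. This conversion is the main obstacle: because the deficiency hypothesis is strictly weaker than the vanishing counting-function hypothesis of Theorem A, one cannot simply eliminate the top-degree term of $Q$ by counting $a$-points, so a more delicate estimate on $m(r, 1/g)$ combined with differentiation of $Q$ will be needed to close the argument.
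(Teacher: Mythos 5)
Your proposal re-derives, in this special case, the paper's proof of Theorem \ref{thm:h}(\ref{t3}), which is exactly what Theorem \ref{cor:Bank} invokes (with $f=\Gamma$, $g=h$, $k=j$, and $\mathcal{S}_{\Gamma'/\Gamma}=\mathcal{M}_0$ since $T(r,\Gamma'/\Gamma)=r+o(r)$). The broad plan --- pass to an ADE for $\omega=(\Gamma e^h)'/(\Gamma e^h)=\Gamma'/\Gamma+h'$, differentiate, view the result as an ADE for $h^{(j)}$ with small coefficients, contradict the deficiency --- matches the paper's. Two minor points: the two-case transcendence-degree detour is unnecessary, since $A(\mathcal{M}_0)$ is a differential field and $\Gamma e^h\in A(\mathcal{M}_0)$ gives $\omega\in A(\mathcal{M}_0)$ and $\omega^{(k)}\in A(\mathcal{M}_0)$ directly; and ``differentiate $j-1$ times'' breaks at $j=0$ (the paper instead uses $(\Gamma'/\Gamma)^{(j)}+h^{(j+1)}$ and treats $h^{(j)}$ as the unknown whose higher derivatives appear).

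The genuine gap is the final step you flag as ``the main obstacle.'' Knowing that $g:=h^{(j)}-a$ satisfies a nontrivial ADE with small coefficients and $\delta(0,g)>0$ is not by itself a contradiction: $e^z$ satisfies $y'-y=0$ with $\delta(0,e^z)=1$. What is needed is that the ADE $Q$ have a nonzero degree-zero part after the shift, i.e.\ that $a$ is \emph{not} a solution of $Q$. That is exactly the hypothesis of the paper's Lemma \ref{thm:C} (Mohon'ko's small-function lemma, which follows from the Clunie-type Lemma \ref{lem:sm}), and once it holds Lemma \ref{thm:C} yields $m\bigl(r,1/(h^{(j)}-a)\bigr)=S(r,h^{(j)})$ in one stroke, contradicting $\delta(a,h^{(j)})>0$. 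The verification that $a$ does not solve $Q$ is where the hypertranscendence of $\Gamma$ enters a second time, and it is what you are missing: writing $Q(z,u_0,\ldots,u_{n+1})=P\bigl(z,(\Gamma'/\Gamma)^{(j)}+u_1,\ldots,(\Gamma'/\Gamma)^{(j+n)}+u_{n+1}\bigr)$ one has $Q(z,a,a',\ldots)=P\bigl(z,(\Gamma'/\Gamma)^{(j)}+a',\ldots\bigr)$, and if this vanished identically then $(\Gamma'/\Gamma)^{(j)}+a'\in A(\mathcal{M}_0)$, which (since $a'\in\mathcal{M}_0$ and $A(\mathcal{M}_0)$ is a differential field) would force $\Gamma\in A(\mathcal{M}_0)$, contradicting Bank--Kaufman. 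None of the tools you propose in its place (``differentiation of $Q$,'' SMT with moving targets) identifies or supplies this condition.
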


Related to Bank's Conjecture, we have the following partial result.

\begin{thm}\label{cor:Bank2} For any entire function $h$, $P(z, \Gamma e^h \dots, (\Gamma e^h)^{(n)})\not\equiv 0$ for any nontrivial distinguished polynomial $P(z, u_0, \dots, u_n)$ over $\mathcal{M}_0$.
\end{thm}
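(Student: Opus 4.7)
The plan is a reduction via logarithmic differentiation, followed by a local pole analysis that sidesteps Nevanlinna growth comparison entirely. Set $w := \Gamma e^h$ and $F := w'/w = \Gamma'/\Gamma + h'$. Fa\`a di Bruno's formula gives
\[
w^{(j)} = w\cdot B_j(F, F', \dots, F^{(j-1)}),
\]
where $B_j$ is the $j$-th complete exponential Bell polynomial (with $B_0=1$), isobaric of weight $j$ in the grading $\mathrm{wt}(F^{(i)})=i+1$. Substituting $u_j = w^{(j)}$ into $P = \sum_\alpha c_\alpha(z)\,u_0^{\alpha_0}\cdots u_n^{\alpha_n}$, the $\alpha$-monomial becomes $c_\alpha(z)\,w^{|\alpha|}\prod_{j\ge1}B_j^{\alpha_j}$.

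Next I would analyze the identity $P(z,w,\dots,w^{(n)})\equiv 0$ locally at a pole $z=-k$ of $\Gamma$ (hence of $w$). Since $h'$ is entire and $\Gamma'/\Gamma$ has a simple pole at $-k$ with residue $-1$, the principal part of $F^{(i)}$ at $-k$ is $(-1)^{i+1}i!/(z+k)^{i+1}$, completely independent of $h$; and $w$ itself has a simple pole there with residue $(-1)^k e^{h(-k)}/k!$. Consequently the $\alpha$-monomial has pole order exactly the weighted degree $\|\alpha\|:=\sum_j(j+1)\alpha_j$ at $-k$, with leading Laurent coefficient factoring as $c_\alpha(-k)$ times an $h$-independent combinatorial constant times $e^{|\alpha|h(-k)}$. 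For $k$ large and outside the $o(r)$-many poles of the coefficients $c_\alpha\in\mathcal{M}_0$, the distinguished hypothesis on $P$ is what guarantees that the top-weight monomials contribute independently to the identity — e.g.\ by being the unique monomial of that weight, or by having leading coefficients that are linearly independent over $\mathcal{M}_0$ — so that $P\equiv 0$ forces the corresponding $c_\alpha(-k)=0$ for infinitely many $k$, hence $c_\alpha\equiv 0$. Peeling off the next weight layer and iterating collapses all coefficients and contradicts the nontriviality of $P$.

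The main obstacle is precisely this monomial-separation step. Because $h$ may be any entire function, $F$ and each $F^{(i)}$ can have Nevanlinna characteristic vastly dominating every element of $\mathcal{M}_0$, so the comparison-of-growth arguments that power the Bank--Kaufman proof and Theorem~A above are unavailable here. What survives is the observation — at the core of Bank's 1977 question — that $e^h$ introduces no new zeros or poles, so the singular data of $w$ at each $-k$ is inherited entirely from $\Gamma$, up to the nonzero scalar $e^{h(-k)}$. This $h$-independence of the leading local data is what keeps the pole-by-pole matching effective uniformly in the choice of entire function $h$.
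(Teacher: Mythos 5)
Your route — Faà di Bruno to express $w^{(j)}$ in terms of $F = w'/w$, then local Laurent analysis at the poles $z=-k$ of $\Gamma$ — is genuinely different from the paper's (which reduces Theorem~\ref{cor:Bank2} to Theorem~\ref{thm:h}(\ref{t4}) and uses the Clunie-type estimate of Lemma~\ref{lem:sm} together with $N(r,0,\Gamma e^h)=N(r,0,\Gamma)=0$, splitting into $\rho(e^h)<\infty$ and $\rho(e^h)=\infty$). But there is a real gap at the step you yourself flag as ``the main obstacle,'' and it is not a presentational issue: the distinguished hypothesis constrains the \emph{total} degrees $|\alpha|=\sum_j\alpha_j$ to be pairwise distinct, whereas your pole-order grading sorts monomials by the \emph{weighted} degree $\|\alpha\|=\sum_j(j+1)\alpha_j$. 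These gradings are incompatible: e.g.\ $u_0^2$ and $u_1$ have $|\alpha|=2$ and $1$ (so they may coexist in a distinguished $P$) but the same pole order $\|\alpha\|=2$ at $-k$. So the top Laurent coefficient at $-k$ is, in general, a \emph{sum} over several $\alpha$'s of distinct total degree, and ``being the unique monomial of that weight'' simply fails.

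Even when several $\alpha$'s share the top weight, what you get is, at each admissible $k$, a single scalar relation
\[
\sum_{\|\alpha\|=M} c_\alpha(-k)\,K_\alpha\left(\tfrac{(-1)^k}{k!}\right)^{|\alpha|} e^{|\alpha| h(-k)} = 0,
\]
where $K_\alpha$ is an explicit nonzero constant. Since $h$ is \emph{given and fixed}, the numbers $e^{|\alpha|h(-k)}$ for distinct $|\alpha|$ are just specific complex numbers for each $k$; nothing a priori prevents coefficients $c_\alpha\in\mathcal{M}_0$ from satisfying this one equation per $k$ without all vanishing identically. (The ``varies with $h$'' intuition would help if $h$ were free, but the quantifiers run the other way.) Making the separation work would require either extracting further constraints from lower-order Laurent coefficients at $-k$ — where $h'(-k)$, $h''(-k)$, $\dots$ enter and the $h$-independence you invoke breaks down — or importing some global information about the identity away from the poles. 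The paper avoids this entirely by working with the $|\alpha|$-grading (which \emph{does} align with the distinguished hypothesis) inside the proximity-function estimate of Lemma~\ref{lem:sm}, and by exploiting that $fe^g$ has the same zero set as $f$, so $N(r,0,\Gamma e^h)=0$; the only case where the needed smallness $T(r,\Gamma)=S(r,e^h)$ fails is $\rho(e^h)<\infty$, i.e.\ $h$ a polynomial, which is disposed of separately. As written, your argument does not establish the theorem.
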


\begin{rmk} The notion of distinguished polynomial was first introduced by B. Q. Li and Z. Ye in \cite{LY16}. The definition is given as follow. 

 Let $I=(i_0, i_1, \dots, i_k)$ be a multi-index with $|I|=i_0+i_1+\cdots+i_k$. A polynomial in the variables $u_0, u_1, \dots, u_k$ with meromorphic function coefficients in a set $S$ can always be written as $$P(z, u_0, u_1, \dots, u_k)=\sum_{I\in\Lambda}a_I(z)u_0^{i_0}u_1^{i_1}\cdots u_k^{i_k},$$ where the coefficients $a_I$ are meromorphic functions in $S$ and $\Lambda$ is an index set. 
 We call $P$ a \emph{distinguished polynomial} in $u_0, u_1, \dots, u_k$ with coefficients in $S$, or simply an \emph{$S$-distinguished polynomial}, if the index set $\Lambda$ satisfies $|I_i|\neq |I_j|$ for any distinct indices $I_i, I_j$ in $\Lambda$. In other words, each homogeneous part of the distinguished polynomial $P$ contains one term only.

\end{rmk}





 If $\mathcal{K}$ is a field of meromorphic functions, we denote by $A(\mathcal{K})$ the set of all meromorphic functions which satisfy some algebraic differential equation over $\mathcal{K}$. It is well known (see Chapter 14 of \cite{IIpo11}) that $A(\mathcal{K})$ is a differential field, i.e., a field with an additional map $D: A(\mathcal{K})\rightarrow A(\mathcal{K})$ such that $D(a\cdot b)=(Da)\cdot b+a\cdot Db$ for any $a, b\in A(\mathcal{K})$.\\

To explain the difference between Theorem A and Theorem \ref{cor:Bank},
let us sketch the main idea of the proof of Theorem A (see Part B in \cite{Bank80} or Chapter 14 of \cite{IIpo11}). \\

Let $h$ be an entire function satisfying the assumption (\ref{eqn:B}) in Theorem A.
 If $\Gamma e^h\in A(\mathcal{M}_0)$ and $a\in\mathbb{C}$, set $g=h- (az^j/j!)$ which satisfies the condition $\overline N \big (r, 1/g^{(j)}\big ) = S(r, g^{(j)})$. Applying Lemma A below and using the fact that $T(r, \Gamma'/\Gamma)=r+o(r)$,  one can conclude that $T(r, g^{(j)})=O(r)$. On the other hand, $g$ is an entire function with $\overline N \big (r, 1/g^{(j)}\big ) = S(r, g^{(j)})$, thus $T(r, g^{(j+1)}/g^{(j)})=o(r)$. Hence $g^{(j+1)}/g^{(j)}$ belongs to $\mathcal{M}_0$ which implies $g\in A(\mathcal{M}_0)$. Thus $h\ \mbox{and}\ h'\in A(\mathcal{M}_0)$. Since $h'=(e^h)'/(e^h)$, it follows that $e^h\in A(\mathcal{M}_0)$. Combining with the assumption that $\Gamma e^h\in A(\mathcal{M}_0)$, one can deduce a contradiction to the hypertranscendence of $\Gamma$ over $\mathcal{M}_0$. \\
 
 Actually, from the proof of Theorem A, it is not hard to see that the assumption $\Gamma e^h\in A(\mathcal{M}_0)$ and the condition (\ref{eqn:B}) imply that $T(r, h^{(j)})=O(T(r, \Gamma'/\Gamma))$. Our Theorem \ref{cor:Bank} considers a sort of complement assumption that $T(r, \Gamma'/\Gamma)=S(r, h^{(j)})$.\\
 Under this assumption, the condition (\ref{eqn:hj}) is less restrictive than the one on $\overline N(r, 1/(h^{(j)}-a))$ in Theorem A. In addition, $a$ can also be nonconstant.\\

To produce more examples of hypertranscendental functions, Bank also investigated the hypertranscendency of the perturbation of hypertranscendental  meromorphic functions  by adding a small function.

 \begin{thmB}[\cite{Bank80}]\label{thm:B}
 Let $f$ be a meromorphic function on the complex plane which is hypertranscendental over a differential field $\mathcal{S}\subset\mathcal{S}_f$ . Let $g$ be a meromorphic function on the complex plane. Then, if $f+g$ satisfies an algebraic differential equation over $\mathcal{S}$, we have $$T(r, f)=O(\overline N(r, 1/f)+\overline N(r, f) +T(r, g))$$ as $r\rightarrow\infty$ outside of a possible exceptional set of finite measure. 
 
 In particular, if all $\overline N(r,1/f), \overline N(r, f)$ and $T(r, g)$ are $S(r, f)$, then $f+g$ must be hypertranscendental over $\mathcal{S}$.
 \end{thmB}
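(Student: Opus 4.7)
The plan is to turn the hypothesis $f+g \in A(\mathcal{S})$ into a nontrivial algebraic differential equation satisfied by $f$ alone, and then extract the bound by normalizing that equation by a power of $f$ and playing the top-degree terms against the rest.

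Fix a nonzero polynomial $P(z, u_0, \ldots, u_n) = \sum_I a_I(z)\, u_0^{i_0} \cdots u_n^{i_n}$ of total degree $D$ with coefficients $a_I \in \mathcal{S}$ witnessing $f+g \in A(\mathcal{S})$. Substituting $(f+g)^{(k)} = f^{(k)} + g^{(k)}$ into $P\bigl(z, f+g, \ldots, (f+g)^{(n)}\bigr) \equiv 0$ and regrouping by monomials in $(f, f', \ldots, f^{(n)})$ produces a formally nonzero polynomial identity
\begin{equation*}
Q(z, f, f', \ldots, f^{(n)}) := \sum_J b_J(z)\, f^{j_0} (f')^{j_1} \cdots (f^{(n)})^{j_n} \equiv 0,
\end{equation*}
whose top homogeneous part $Q_D$ (in total degree of the $f$-variables) coincides with $P_D$ and hence has coefficients in $\mathcal{S}$, while each remaining $b_J$ with $|J| < D$ is a polynomial in $g, g', \ldots, g^{(n)}$ over $\mathcal{S}$. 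Factoring $Q_D(z, f, f', \ldots, f^{(n)}) = f^D\,\tilde Q_D\bigl(z, f'/f, \ldots, f^{(n)}/f\bigr)$ with $\tilde Q_D(z, v_1, \ldots, v_n) := Q_D(z, 1, v_1, \ldots, v_n)$, the equation rearranges as
\begin{equation*}
f^D\,\tilde Q_D\bigl(z, f'/f, \ldots, f^{(n)}/f\bigr) = -\sum_{|J| < D} b_J(z)\, f^{|J|} \prod_{k=1}^n (f^{(k)}/f)^{j_k}.
\end{equation*}

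The essential observation is that $\tilde Q_D\bigl(z, f'/f, \ldots, f^{(n)}/f\bigr) \not\equiv 0$: were it identically zero, $Q_D(z, f, f', \ldots, f^{(n)}) \equiv 0$ would itself be a nontrivial ADE for $f$ with coefficients in $\mathcal{S}$, contradicting the hypertranscendence of $f$ over $\mathcal{S}$. With this nonvanishing in hand, the remaining estimates are routine. The logarithmic derivative lemma together with a pole-order count give $T(r, f^{(k)}/f) = O\bigl(\overline N(r, 1/f) + \overline N(r, f)\bigr) + S(r, f)$; the inclusion $\mathcal{S} \subseteq \mathcal{S}_f$ gives $T(r, a_I) = S(r, f)$; and $T(r, g^{(k)}) = O(T(r, g)) + S(r, g)$ outside a set of finite measure. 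Applying these to the rearranged identity bounds its left side below by $D\,T(r, f) - O\bigl(\overline N(r, 1/f) + \overline N(r, f)\bigr) - S(r, f)$ and its right side above by $(D-1)\,T(r, f) + O\bigl(T(r, g) + \overline N(r, 1/f) + \overline N(r, f)\bigr) + S(r, f)$; comparing the two and absorbing $S(r, f)$ into the left yields the asserted bound. The \emph{in particular} statement is then immediate, since if all of $\overline N(r, 1/f), \overline N(r, f), T(r, g)$ are $S(r, f)$ the inequality collapses to $T(r, f) = S(r, f)$, impossible for transcendental $f$.

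The step I expect to require the most care is the nonvanishing of the leading factor $\tilde Q_D\bigl(z, f'/f, \ldots, f^{(n)}/f\bigr)$, which rests on tracing carefully that the top $f$-degree part of $Q$ really is $P_D$ after the multinomial expansion, so that hypertranscendence of $f$ over $\mathcal{S}$ can be invoked cleanly; everything else is standard Nevanlinna bookkeeping, with the only attention needed being to combine the various exceptional sets of finite measure into one.
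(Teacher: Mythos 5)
Your structural reduction is exactly the right one and matches Bank's: substitute $u_k = f^{(k)} + g^{(k)}$ into $P(z,f+g,\ldots,(f+g)^{(n)}) \equiv 0$, observe that the top homogeneous part $Q_D$ of the resulting polynomial identity in $f,f',\ldots,f^{(n)}$ equals $P_D$ and hence has coefficients in $\mathcal{S}$, and invoke hypertranscendence of $f$ over $\mathcal{S}$ to see $Q_D(z,f,\ldots,f^{(n)}) \not\equiv 0$. The paper itself does not reprove Theorem B; it cites Bank and points out that the proof rests on Lemma~A, which is stated (with the nondegeneracy hypothesis $P_q(f,\ldots,f^{(n)}) \not\equiv 0$) precisely so that your reduction plugs into it and gives the bound with $\Delta(r) = O(T(r,g)) + S(r,f)$.

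The gap is in the final ``routine Nevanlinna bookkeeping,'' where you re-derive (in effect) Lemma~A by comparing $T$ of the two sides of $f^D\tilde Q_D = -\sum_{|J|<D} b_J f^{|J|}\prod_k (f^{(k)}/f)^{j_k}$. The lower bound $T(r,f^D\tilde Q_D) \ge D\,T(r,f) - O(\overline N(r,1/f)+\overline N(r,f)) - S(r,f)$ is fine. But the upper bound ``$\le (D-1)T(r,f) + O(\cdots)$'' on the right side does not follow from subadditivity: $T(r,\sum_J c_J) \le \sum_J T(r,c_J) + O(1)$, and if several monomials $J$ have $|J| = D-1$, the sum gives a coefficient strictly larger than $D-1$ in front of $T(r,f)$, which makes the resulting inequality vacuous once $D \ge 2$. (For $D=1$ the RHS is a single small function and your argument does go through, but not in general.) The correct estimate is exactly what Lemma~A supplies, and its proof requires the Clunie-type device of splitting the circle $|z|=r$ into $\{|f|\ge 1\}$ and $\{|f|<1\}$ and using the two ``ends'' of the degree range (the lowest and highest $p,q$ with $Q_p(f,\ldots)\not\equiv 0$, $Q_q(f,\ldots)\not\equiv 0$) to control $m(r,1/f)$ and $m(r,f)$ separately — this is closely related to the paper's own Lemma~\ref{lem:sm}, but with coefficients only bounded by $\Delta(r)$ rather than $S(r,f)$. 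You should invoke Lemma~A directly at this point rather than attempt to replace it by termwise $T$-bounds; the rest of your argument, and the ``in particular'' step, is then correct.
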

 
 
The proofs of Theorem A and B in \cite{Bank77, Bank80} depend on the following Lemma first appeared in \cite{Bank761}.
 
\begin{lemA}[\cite{Bank761}]
Let $P(z, y, y', \dots, y^{(n)})$ be a polynomial in $y, y', \dots, y^{(n)}$ whose coefficients are meromorphic functions on $\mathbb{C}$. For each $r>0$, let $\Delta(r)$ be the maximum of the Nevanlinna characteristics of the coefficients of $P$. Let $f$ be a nonzero meromorphic function on the complex plane satisfying the equation $P=0$, but for some nonnegative integer $q$, $P_q(f, f', \dots, f^{(n)})\neq 0$, where $P_q$ is the homogeneous part of $P$ of total degree $q$ in the indeterminates $y, y', \dots, y^{(n)}$. Then 
$$T(r, f)=O(E(r)),$$ as $r\to\infty$, 
outside of a possible exceptional set of finite measure, 
where $$E(r)=\overline N(r, 1/f)+\overline N(r, f) +\Delta(r)+\log r.$$ In addition, for any $\alpha>1$, there exist positive constants $c$ and $r_0$ such that $$T(r, f)\leq cE(\alpha r), \ \mbox{for all}\ r\geq r_0.$$
\end{lemA}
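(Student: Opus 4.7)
The plan is to argue by contradiction. Suppose $y := \Gamma e^h$ satisfies $P(z, y, y', \dots, y^{(n)}) \equiv 0$ for some nontrivial distinguished polynomial $P$ over $\mathcal{M}_0$, and derive, on the one hand, the upper bound $T(r, y) = O(r)$ via Bank's Lemma~A, and on the other hand, the conflicting lower bound $T(r, y) \gtrsim r\log r$ via a Fourier-analytic estimate rooted in Stirling's formula.

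For the upper bound, I first observe that because $P$ is distinguished, each homogeneous part $P_q$ is either identically zero or a single monomial $a_k\prod_j u_j^{i_{j,k}}$ with $\sum_j i_{j,k} = q$. Since $y = \Gamma e^h$ has poles, it is not a polynomial, so no derivative $y^{(j)}$ vanishes identically; combined with $a_k \not\equiv 0$ this shows $P_q(y, y', \dots, y^{(n)}) \not\equiv 0$ for at least one $q$, so Lemma~A applies. Since $y$ has no zeros and its poles coincide with those of $\Gamma$, we have $\overline{N}(r, 1/y) = 0$ and $\overline{N}(r, y) = r + O(\log r)$; moreover $\Delta(r) = o(r)$ because the coefficients of $P$ lie in $\mathcal{M}_0$. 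Hence $T(r, y) \le cE(\alpha r) = O(r)$ for all sufficiently large $r$.

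For the lower bound, I appeal to Stirling's expansion
\[
\log|\Gamma(re^{i\theta})| = r\cos\theta \cdot \log r + O(r), \qquad |\theta| \le \pi - \delta,
\]
together with the reflection formula $\Gamma(z)\Gamma(1-z) = \pi/\sin\pi z$ on the thin sector $|\theta - \pi| \le \delta$. Choosing $\delta = 1/\log r$ and restricting $r$ to the complement of a $1/n^2$-neighborhood of each positive integer $n$ (an exceptional set of finite total measure) makes the near-pole contribution to the Fourier integral $O(r)$. Since $\mathrm{Re}\,h(re^{i\theta}) = \sum_n r^n[\mathrm{Re}(a_n)\cos n\theta - \mathrm{Im}(a_n)\sin n\theta]$ is harmonic, its $\cos\theta$-Fourier coefficient equals $r\,\mathrm{Re}(a_1) = O(r)$, so the $\cos\theta$-coefficient of $\log|y| = \log|\Gamma| + \mathrm{Re}\,h$ is $c_1 = r\log r + O(r)$. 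The elementary inequality $|c_1[f]| \le \pi^{-1}\int_{-\pi}^{\pi}|f|\,d\theta$ then yields
\[
m(r, y) + m(r, 1/y) = \frac{1}{2\pi}\int_{-\pi}^{\pi}\bigl|\log|y(re^{i\theta})|\bigr|\,d\theta \ge \tfrac{1}{4}\, r\log r
\]
for large $r$ outside the exceptional set. Translating via $m(r, 1/y) = T(r, y) + O(1)$ and $m(r, y) = T(r, y) - r + O(\log r)$ produces $T(r, y) \ge \tfrac{1}{8}r\log r + O(r)$, in direct contradiction with the $O(r)$ bound of the previous paragraph.

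The principal technical obstacle is controlling $\log|\Gamma(re^{i\theta})|$ near the negative real axis, where the nonpositive-integer poles of $\Gamma$ lie and the Stirling asymptotic fails. I plan to handle this by the measure-theoretic exclusion of a $1/n^2$-neighborhood of each positive integer in the $r$-variable, which keeps $r$ uniformly away from the poles and bounds the near-pole contribution by $O(\delta\log r) = O(1)$ with $\delta = 1/\log r$. This exception-set device is entirely compatible with the conventions already used in Lemma~A and the rest of Nevanlinna theory, so the contradiction is genuinely obtained on a sequence of $r \to \infty$.
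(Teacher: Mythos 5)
Your proposal does not prove the statement it is supposed to prove. The statement is Lemma~A, Bank's general estimate: for an \emph{arbitrary} nonzero meromorphic $f$ satisfying a polynomial differential equation $P=0$ whose homogeneous part $P_q$ does not vanish identically along $f$, one has $T(r,f)=O\big(\overline N(r,1/f)+\overline N(r,f)+\Delta(r)+\log r\big)$. What you outline instead is a proof of Theorem~\ref{cor:Bank2} (non-vanishing of distinguished polynomials at $\Gamma e^h$), and in the very first step you invoke Lemma~A itself (``the upper bound $T(r,y)=O(r)$ via Bank's Lemma~A'') to do so. As a proof of Lemma~A this is vacuous and circular; moreover, nothing in your argument---Stirling's expansion, the reflection formula, the first Fourier coefficient of $\log|\Gamma e^h|$---makes sense in the generality of Lemma~A, where $f$ is an arbitrary meromorphic function and no gamma function is present. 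The actual proof (in \cite{Bank761}; compare the paper's Lemma~\ref{lem:sm}) is a Clunie--Mohon'ko type argument: isolate the lowest homogeneous part $P_q$ with $P_q(f,\dots,f^{(n)})\not\equiv 0$, write $P_q/f^N$ on $\{|f|\ge 1\}$ and $\{|f|\le 1\}$ as combinations of logarithmic-derivative quotients, bound $m(r,P_q/f^N)$ by the logarithmic derivative lemma and $\Delta(r)$, and then recover $T(r,f)$ from the first main theorem together with the counting functions of the zeros and poles of $f$. None of these steps appears in your proposal.

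Even judged as an attack on Theorem~\ref{cor:Bank2}, your route differs entirely from the paper's (which deduces that theorem from Lemma~\ref{lem:sm} via Theorem~\ref{thm:h}(\ref{t4}) and never needs a lower bound for $T(r,\Gamma e^h)$), and it rests on the unproved claim $T(r,\Gamma e^h)\gtrsim r\log r$ outside a small set. That claim would require a careful treatment of $\log|\Gamma(re^{i\theta})|$ near $\theta=\pi$ and a reconciliation of the two exceptional sets (the one from Lemma~A and the one you introduce for the Fourier estimate) before a contradiction on a common sequence $r\to\infty$ could be extracted. But this is beside the point: the task was Lemma~A, and the proposal does not engage with it.
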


In 1991, Y. Z. He and C. C. Yang \cite{HY91} proved that $\Gamma(g)$ is hypertranscendental over the field $\mathcal{M}^g$ of meromorphic functions $y$ with $T(r, y)=O(T(r, g))$ by using Steinmetz's Reduction Theorem (Theorem C below). Their method can be applied to the general case (see Theorem \ref{thm:fg}).  In 2007,  Markus \cite{Markus07} applied the method of differential algebra to obtain the hypertranscendence of $\zeta(\sin z)$ and $\Gamma(\sin z)$ over $\mathbb{C}$, and he proved  the differential independence between $f_i$ and $f_j(\sin z)$ for $i, j =1, 2$, where $f_1=\Gamma$ and  $f_2=\zeta$.\\

Applying the same idea of He and Yang in \cite{HY91}, we obtain the following general result which covers the results of He and Yang \cite{HY91}.
\begin{thm}\label{thm:fg}
Let $f$ be hypertranscendental over the rational function field $\mathbb{C}(z)$ and $g$ be a nonconstant entire function. Then $f\circ g$ is hypertranscendental over the field $\mathcal{S}^g$.
\end{thm}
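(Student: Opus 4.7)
The plan is to argue by contradiction, along the lines of He and Yang \cite{HY91}. Suppose $f\circ g$ satisfies some nontrivial algebraic differential equation
$$P\bigl(z,f\circ g,(f\circ g)',\ldots,(f\circ g)^{(n)}\bigr)=0$$
with coefficients in $\mathcal{S}^g$. Fa\`a di Bruno's formula expresses each $(f\circ g)^{(k)}$ as a linear combination of $f(g),f'(g),\ldots,f^{(k)}(g)$ with coefficients in $\mathbb{C}[g',g'',\ldots,g^{(k)}]$, and the logarithmic derivative lemma gives $T(r,g^{(k)})=O(T(r,g))$, so these coefficients remain in $\mathcal{S}^g$. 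The substitution $u_k\mapsto\sum_{i\le k}\alpha_{k,i}v_i$ is lower triangular with diagonal entries $(g')^k\not\equiv 0$, hence an automorphism of $\mathcal{S}^g[v_0,\ldots,v_n]$ that preserves nontriviality. Expanding $P$ and collecting terms yields a nontrivial relation
$$\sum_{I\in\Lambda}A_I(z)\prod_{j=0}^{n}\bigl(f^{(j)}(g(z))\bigr)^{i_j}=0,\qquad A_I\in\mathcal{S}^g.$$

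Next, set $M_I(w):=\prod_j\bigl(f^{(j)}(w)\bigr)^{i_j}$ and $F_I(z):=M_I(g(z))$. Since $f$ is hypertranscendental over $\mathbb{C}(z)$ and a fortiori over $\mathbb{C}$, any nontrivial $\mathbb{C}$-linear combination of the $M_I$ would be a forbidden algebraic differential equation for $f$; so the $M_I$ are $\mathbb{C}$-linearly independent. Because $g$ is nonconstant entire, its image is dense in $\mathbb{C}$ by Picard's theorem, so the pullbacks $F_I$ are likewise $\mathbb{C}$-linearly independent.

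The last ingredient is a growth comparison: for any nonconstant entire $g$ and transcendental meromorphic $f$, one has $T(r,g)=o(T(r,f\circ g))$ outside an exceptional set. For polynomial $g$ this is immediate from $T(r,g)=O(\log r)$ together with the transcendence of $f$, and for transcendental entire $g$ it follows from P\'olya's theorem on the growth of a composition. Consequently
$$T(r,A_I)=O(T(r,g))=o\bigl(\max_I T(r,F_I)\bigr),$$
and Steinmetz's reduction theorem (Theorem C in the text) applied to $\sum_I A_IF_I=0$ forces each $A_I\equiv 0$, contradicting the nontriviality established above.

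The main obstacle is confirming that the hypotheses of Steinmetz's reduction theorem are met: the Fa\`a di Bruno rewriting and the $\mathbb{C}$-linear independence of the $F_I$ are essentially bookkeeping, but the growth comparison $T(r,g)=o(T(r,f\circ g))$ must be checked separately in the polynomial and transcendental-entire cases, and the exceptional sets produced by these estimates have to be reconciled with the conventions used elsewhere in the paper.
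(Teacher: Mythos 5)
Your setup is sound and matches the paper's strategy up to the point of invoking Steinmetz: you correctly use Fa\`a di Bruno to rewrite $P(z, f\circ g,\ldots,(f\circ g)^{(n)})=0$ as a sum $\sum_I A_I(z)\,M_I(g(z))=0$ where the $M_I$ are differential monomials of $f$ with constant coefficients and $A_I\in\mathcal{S}^g$ (this is exactly the paper's decomposition $\sum_j (M_j(f)\circ g)\,H_j(g)=0$). The lower-triangular change of variables argument for nontriviality is also fine, and the $\mathbb{C}$-linear independence of the $M_I$ (and hence of the $M_I\circ g$) follows as you say.

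The problem is the last step. Theorem~C does \emph{not} conclude that all the coefficients $A_I$ vanish, and no version of the reduction theorem does: linear independence of the $F_I$ over $\mathbb{C}$ together with $T(r,A_I)=O(T(r,g))$ (or even $o(T(r,f\circ g))$) is simply not strong enough to force $A_I\equiv 0$. What Theorem~C actually says, in your notation, is this: since not all $A_I\equiv 0$, there exist polynomials $Q_I\in\mathbb{C}[z]$, not all zero, such that
\[
\sum_I Q_I(z)\,M_I(z)=0.
\]
But $\sum_I Q_I M_I$ is a differential polynomial in $f$ with $\mathbb{C}[z]$-coefficients, and because the $M_I$ are \emph{distinct} differential monomials, the relation is a nontrivial algebraic differential equation for $f$ over $\mathbb{C}(z)$. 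This contradicts the hypothesis that $f$ is hypertranscendental over $\mathbb{C}(z)$, and that is the contradiction the paper reaches. So the endgame should invoke the ``Furthermore'' clause of Theorem~C, not a false vanishing conclusion. Relatedly, the growth estimate $T(r,g)=o(T(r,f\circ g))$ you spend a paragraph verifying plays no role here: the hypothesis of Theorem~C only requires $T(r,A_I)=O(T(r,g))$, which is immediate from $A_I\in\mathcal{S}^g$. (That Clunie-type growth comparison is used in the proof of Theorem~3(1), but not in the proof of Theorem~2.)
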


As a consequence, we can generalize a result of L. Markus (see Lemma 1 in \cite{Markus07}) by using a different method.
\begin{cor} Let $a$ be a nonzero complex number. Then
both $\Gamma(\sin az)$ and $\zeta(\sin az)$ are hypertranscendental over the field of meromorphic functions $y$ with $T(r, y)=O(r)$ as $r\rightarrow\infty$ outside some set of finite measure.
\end{cor}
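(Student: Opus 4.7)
The plan is to invoke Theorem \ref{thm:fg} directly, taking $g(z) = \sin(az)$ and $f$ to be $\Gamma$ or $\zeta$ in turn. The classical theorems of H\"older \cite{Holder87} and Hilbert \cite{Hilbert01}, recalled at the start of Section \ref{sec:I}, assert that both $\Gamma$ and $\zeta$ are hypertranscendental over $\mathbb{C}(z)$. Since $a \neq 0$, the function $\sin(az)$ is a nonconstant entire function. Therefore the hypotheses of Theorem \ref{thm:fg} are satisfied, and I can conclude that $\Gamma(\sin az)$ and $\zeta(\sin az)$ are hypertranscendental over $\mathcal{S}^{\sin az}$.

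The remaining step is to identify the field appearing in the corollary with a subfield of $\mathcal{S}^{\sin az}$. Writing $\sin(az) = (e^{iaz} - e^{-iaz})/(2i)$ and using the standard evaluation $T(r, e^{iaz}) = |a|r/\pi$, one sees that $T(r, \sin az) = |a|r/\pi + O(1)$. In particular, any meromorphic $y$ with $T(r, y) = O(r)$ outside a set of finite measure automatically satisfies $T(r, y) = O(T(r, \sin az))$, hence lies in $\mathcal{S}^{\sin az}$. Since hypertranscendence over $\mathcal{S}^{\sin az}$ implies hypertranscendence over every subfield, the corollary drops out.

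No serious obstacle arises: the entire argument is a packaging of Theorem \ref{thm:fg} with the classical hypertranscendence of $\Gamma$ and $\zeta$ and the elementary growth estimate $T(r, \sin az) \asymp r$. The only point that requires any care is verifying the field inclusion in the right direction, which is immediate from the linear growth of $T(r, \sin az)$.
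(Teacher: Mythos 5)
Your argument is correct and is precisely the intended derivation: the paper states the corollary as an immediate consequence of Theorem \ref{thm:fg}, and your packaging—classical hypertranscendence of $\Gamma$ and $\zeta$ over $\mathbb{C}(z)$, nonconstancy of $\sin az$, and the growth estimate $T(r,\sin az)\asymp r$ identifying the field with $\mathcal{S}^{\sin az}$—is exactly what is needed. No gap.
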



It is natural to consider the hypertranscendency of $g\circ f$ over some fields for entire hypertranscendental $f$ and meromorphic $g$. This seems to be a more difficult problem as  Steinmetz's Reduction Theorem cannot be applied directly here (see Remark \ref{rmk:fg} in Section \ref{sec:fg}). However, we do obtain one related result in Theorem \ref{thm:h}(\ref{t1}).\\

 Inspired by the results of Bank, He-Yang and Markus,  in this paper, we will first prove a result similar to Lemma A,  that is  $T(r, f)$ can be controlled by one counting function 
$N(r, 1/f)$ (see Lemma \ref{lem:sm}). Using Lemma \ref{lem:sm}, we then  obtain the following results on the hypertranscendency of perturbations of hypertranscendental functions, including that of $\Gamma$ and $\Gamma e^h$.

 \begin{thm}\label{thm:h} 
Let $g$ and $f$ be meromorphic functions and $\mathcal{S}$ be the field of meromorphic functions $y$ with $T(r, y)=S(r, f'/f)$, i.e. $\mathcal{S}=\mathcal{S}_{f'/f}$. Let $\mathcal{O}$ be the set of entire functions on $\mathbb{C}$. Suppose $f$ is hypertranscendental over $\mathcal{S}$. 
\begin{enumerate}[(1)]

\item\label{t1} If $f\in\mathcal{O}$, and $g-R$ has finitely many zeros, where $R$ is a non-constant rational function, then $g\circ f$ is  hypertranscendental over $\mathcal{S}$.

\item\label{t2} Assume that  $f\in\mathcal{S}_g$ and $\delta(a, g)>0
$ for some $a\in\mathcal{S}\setminus\{0\}$, then $fg$ is hypertranscendental over $\mathcal{S}$. 
 
\item\label{t2.1} If there exists a non-negative integer $k$ such that $T(r, f)=S(r, g^{(k)})$ and $\delta(a, g^{(k)})>0$
for some $a\in\mathcal{S}$, then $f+g$ is hypertranscendental over $\mathcal{S}$. 

\item\label{t3} Assume that $g\in\mathcal{O}$, and if there exists a nonnegative integer $k$ such that $T(r, f'/f)=S(r, g^{(k)})$ and \begin{equation}\label{eqn:ng}
\delta(a, g^{(k)})>0
\end{equation} for some $a\in\mathcal{S}$, then $fe^g$ is  hypertranscendental over $\mathcal{S}$.
\item\label{t4} If $g\in\mathcal{O}$ and $f\in\mathcal{S}_{\exp(g)}$ , then $P(z, fe^g, (fe^g)', $ $\dots, (fe^g)^{(n)})\not\equiv 0$ for any nontrivial distinguished polynomial $P(z, u_0, \dots, u_n)$ over $\mathcal{S}$.

\end{enumerate}

\end{thm}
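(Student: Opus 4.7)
The plan is to argue by contradiction: assume $P(z,F,F',\ldots,F^{(n)})\equiv 0$ for some nontrivial $\mathcal{S}$-distinguished polynomial $P=\sum_{I\in\Lambda}a_I u_0^{i_0}\cdots u_n^{i_n}$, with $F:=fe^g$. The degenerate case where $g$ is constant is handled first: $e^g$ is then a non-zero constant, $F^{(j)}=e^g f^{(j)}$, and substitution reduces $P\equiv 0$ to a nontrivial algebraic differential equation for $f$ with coefficients in $\mathcal{S}$, contradicting the hypertranscendence of $f$. So henceforth assume $g$ is non-constant, whence $e^g$ is transcendental entire and $T(r,e^g)\to\infty$.

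The key step is to rewrite $P=0$ as a polynomial identity in $e^g$ with pairwise distinct exponents and small coefficients. By Leibniz and the complete Bell polynomials $B_k$ (giving $(e^g)^{(k)}=B_k(g',\ldots,g^{(k)})e^g$),
\[
\frac{F^{(j)}}{F}=\sum_{k=0}^{j}\binom{j}{k}\frac{f^{(j-k)}}{f}B_k(g',\ldots,g^{(k)}),
\]
a universal polynomial in $\{f^{(i)}/f\}_{i\le j}$ and $\{g^{(l)}\}_{l\le j}$. Setting $Q_I:=\prod_{j=0}^{n}(F^{(j)}/F)^{i_j}$ and using $F^{|I|}=f^{|I|}(e^g)^{|I|}$, the identity $P\equiv 0$ becomes $\sum_{I\in\Lambda}a_I f^{|I|}Q_I(e^g)^{|I|}\equiv 0$. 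Since $P$ is distinguished, this takes the form $\sum_{d\in\mathcal{D}}c_d (e^g)^d\equiv 0$ with $c_d:=a_{I_d}f^d Q_{I_d}$, where $I_d$ is the unique element of $\Lambda$ with $|I|=d$. I would then verify $T(r,c_d)=S(r,e^g)$ for each $d$: the hypothesis $f\in\mathcal{S}_{\exp(g)}$ gives $T(r,f)=S(r,e^g)$; combined with $T(r,a_I)=S(r,f'/f)=O(T(r,f))$, $T(r,g^{(l)})=S(r,e^g)$ for $l\ge 1$ (from the logarithmic-derivative lemma applied to $e^g$), and $T(r,f^{(i)}/f)=O(T(r,f))$, the bound follows. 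As $T(r,e^{(d-d')g})\asymp T(r,e^g)$ for $d\neq d'$, a Borel-type identity theorem for meromorphic small coefficients then forces $c_d\equiv 0$ for every $d\in\mathcal{D}$.

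Finally, $c_d\equiv 0$ together with $a_{I_d},f\not\equiv 0$ yields $Q_{I_d}\equiv 0$ for every $I_d\in\Lambda$. If any $I_d=(|I_d|,0,\ldots,0)$ then $Q_{I_d}=1\neq 0$, an immediate contradiction; otherwise every $I_d$ has some $i_j\ge 1$ with $j\ge 1$, and $Q_{I_d}\equiv 0$ forces $F^{(j^*)}\equiv 0$ for some $j^*\ge 1$, so $F$ is a polynomial. But then $f=F e^{-g}$ gives
\[
T(r,f)\ge T(r,e^{-g})-T(r,F)=T(r,e^g)+O(\log r),
\]
contradicting $T(r,f)=S(r,e^g)$. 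I expect the main technical obstacle to be the growth bound $T(r,c_d)=S(r,e^g)$: each factor of $c_d$ must be shown to have characteristic small compared with $T(r,e^g)$, which requires the hypothesis $f\in\mathcal{S}_{\exp(g)}$ to propagate through the logarithmic derivatives of $f$ and the chain $\mathcal{S}\subset\mathcal{S}_{e^g}$, together with a precise invocation of the meromorphic Borel theorem; once these are secured the rest of the argument is essentially a degree count.
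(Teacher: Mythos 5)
Your proposal only addresses part (5) of the five-part theorem, so as a proof of the full statement it leaves a substantial gap: parts (1)--(4), concerning $g\circ f$, $fg$, $f+g$ and $fe^g$, receive no argument at all. In the paper each of those is handled via the Mohon'ko-type Lemma~\ref{thm:C} applied to a suitable substitute ($f\circ g$, $fg-af$, $g^{(k)}-a$, etc.), ultimately contradicting either a growth comparison or the deficiency hypothesis $\delta(a,\cdot)>0$; none of that machinery appears in your write-up.

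For part (5) itself, your route is correct but genuinely different from the paper's. The paper applies the Clunie-type estimate of Lemma~\ref{lem:sm} directly to $P(z,F,\dots,F^{(n)})=0$ with $F=fe^g$: letting $q$ be the lowest degree with $a_q\not\equiv 0$, the lemma with $N=q+1$ gives $m(r,1/F)=S(r,F)$; since also $N(r,1/F)=N(r,1/f)\le T(r,f)=S(r,F)$, the First Main Theorem yields $T(r,F)=S(r,F)$, a contradiction forcing $a_q\equiv 0$, and iterating shows $P$ is trivial. You instead expand each $F^{(j)}/F$ via Leibniz and Bell polynomials, exploit the distinguished structure to collect a polynomial identity $\sum_d c_d(e^g)^d\equiv 0$ with pairwise distinct exponents, verify $T(r,c_d)=S(r,e^g)$, and invoke a small-coefficient Borel identity theorem. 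Both arguments are sound: the paper's is shorter and avoids Borel entirely, relying only on the Clunie estimate it already develops, while yours is more explicit in isolating $e^g$ but must import a precise version of Borel's theorem with meromorphic small-function coefficients. A minor simplification to your ending: once $c_d\equiv 0$ and $Q_{I_d}\not\equiv 0$ (which holds automatically since $F$ is transcendental, so no $F^{(j)}$ can vanish identically), you already get $a_{I_d}\equiv 0$; the detour through ``$F$ is a polynomial'' is unnecessary.
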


In Section \ref{sec:12}, we will use Theorem \ref{thm:h} to prove Theorem \ref{cor:Bank} and \ref{cor:Bank2}. Section \ref{sec:N} introduces the basics of Nevanlinna Theory. Theorem \ref{thm:fg} and \ref{thm:h} will be proven in Section \ref{sec:fg} and \ref{sec:P}, respectively.

\section{Nevanlinna Theory}\label{sec:N}
We recall the basic notations and results of Nevanlinna theory \cite{IIpo11} which are main tools for proving our results.\\

 Let $f$ and $a$ be meromorphic functions in the complex plane $\mathbb{C}$ and $\mathbb{D}_r=\{|z|<r\}$. Denote the number of poles of $f$ in $\mathbb{D}_r$ by $n(r, f)$, and let $n(r, a)=n(r, a, f)=n(r, 1/(f-a))$. When the number of distinct poles of $f$ in $\mathbb{D}_r$ is denoted by $\overline{n}(r, f)$, we then let $\overline{n}(r, a)=\overline{n}(r, 1/(f-a))$. Correspondingly we define the counting function and truncated counting function in Nevanlinna theory as follows: 
 $$N(r, a, f):=\displaystyle\int_0^r\dfrac{n(t, a)-n(0, a)}{t}dt+n(0,a)\log r;$$
 $$\overline{N}(r, a, f):=\displaystyle\int_0^r\dfrac{\overline{n}(t, a)-\overline{n}(0, a)}{t}dt+\overline{n}(0,a)\log r.$$
 The proximity function is defined as 
 $$m(r, f):=\dfrac{1}{2\pi}\displaystyle\int_0^{2\pi}\log^+\left|f(re^{i\theta})\right|d\theta$$ and 
 $$m(r, a, f):=m(r, 1/(f-a)),$$ where $\log^+x=\max\{0, \log x\}$. The Nevanlinna characteristic function of $f$ is defined by $$T(r, f)=m(r, f)+N(r, f).$$ The First Main Theorem of Nevanlinna theory for small functions \cite{Mohon82} says that
for any meromorphic function $a$ with $T(r,a)=S(r, f)$, $$T(r, f)=T(r, a, f)+S(r, f)$$ where $T(r, a, f):=m(r, a, f)+N(r, a, f)$.
Finally, we denote the Nevanlinna order of $f$ by $$\rho(f):=\limsup_{r\rightarrow\infty}\frac{\log T(r, f)}{\log r},$$ and the deficiency of $a$ for $f$ by $$\delta(a, f):=\liminf_{r\rightarrow\infty}\frac{m(r, a, f)}{T(r, f)}=1-\limsup_{r\rightarrow\infty}\frac{N(r, a, f)}{T(r, f)}.$$ If $\delta(a, f)>0$, then we say $a$ is a  deficient function of $f$.\\


The logarithmic derivative lemma states that 
 \begin{lem}[\cite{IIpo11}]\label{lem:LDL}
 Let $f$ be a transcendental meromorphic function and $k\geq 1$ be an integer. Then $$m\left(r, \frac{f^{(k)}}{f}\right)=S(r, f),$$ and if $f$ is of finite order of growth, then $$m\left(r, \frac{f^{(k)}}{f}\right)=O(\log r).$$
 \end{lem}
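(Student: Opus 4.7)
The plan is to derive both statements from the classical Nevanlinna analysis of the Poisson--Jensen representation of $\log f$, the standard route for the lemma on the logarithmic derivative.

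First I would settle the case $k=1$. Fix $0<r<R$ and apply the Poisson--Jensen formula in $|z|<R$ to $\log f$, expressing $\log f(z)$ as a boundary integral against the Schwarz kernel minus/plus logarithmic contributions from the zeros $\{a_n\}$ and poles $\{b_n\}$ of $f$ in the disk. Differentiating term by term yields an explicit formula for $f'(z)/f(z)$. On $|z|=r$ I would take $\log^+$ and integrate in $\theta$. Two elementary estimates do all the work: (i) $\tfrac{1}{2\pi}\int_0^{2\pi}\log^{+}\bigl|(re^{i\theta}-a)^{-1}\bigr|\,d\theta$ is uniformly bounded in $a$ (split into the short arc closest to $a$ and its complement); (ii) the number of zeros and poles of $f$ in $|z|<R$ is at most $N(R,f)+N(R,1/f)\le 2T(R,f)+O(1)$. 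Combining them gives the key bound
\begin{equation*}
m(r,f'/f)\;\le\;K\Bigl(\log^{+}T(R,f)+\log\tfrac{R}{R-r}+\log R\Bigr)
\end{equation*}
for an absolute constant $K$.

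To convert this $R$-dependent bound into one in $r$ alone, I would apply the Borel growth lemma: for any nondecreasing unbounded $T$, the inequality $T(r+1/T(r))\le 2T(r)$ fails only on a set of $r$'s of finite Lebesgue measure. Taking $R=r+1/T(r,f)$ then produces $m(r,f'/f)=O(\log(rT(r,f)))=S(r,f)$ outside a set of finite measure. When $\rho(f)<\infty$ one has $T(r,f)=O(r^{\rho+\varepsilon})$, so no exceptional set is needed and the right-hand side collapses to $O(\log r)$, giving the second assertion.

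For general $k\ge 2$ I would induct by writing $f^{(k)}/f=(f^{(k)}/f^{(k-1)})\cdot(f^{(k-1)}/f)$ and using subadditivity of $\log^{+}$ to split $m(r,f^{(k)}/f)\le m(r,f^{(k)}/f^{(k-1)})+m(r,f^{(k-1)}/f)$. The $k=1$ case applied to $f^{(k-1)}$ bounds the first summand by $S(r,f^{(k-1)})$, and iterating the elementary estimate $T(r,f')\le 2T(r,f)+S(r,f)$ (from $N(r,f')\le 2N(r,f)$ and $m(r,f')\le m(r,f)+m(r,f'/f)$) yields $T(r,f^{(k-1)})=O(T(r,f))$, so $S(r,f^{(k-1)})=S(r,f)$ and the induction closes. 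The main obstacle I expect is the bookkeeping around the Poisson--Jensen kernel derivative near the zeros and poles of $f$: one must split the integration arc into a \emph{bad} piece where $re^{i\theta}$ is close to some $a_n$ or $b_n$ and a \emph{good} piece elsewhere, controlling the bad piece by its small arc length rather than by the singular integrand. A secondary subtlety is calibrating the choice of $R$ precisely enough that the Borel growth lemma actually yields an exceptional set of \emph{finite} (not merely finite logarithmic) measure, as required by the definition of $S(r,f)$ used in this paper.
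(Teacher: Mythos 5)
Your argument is correct and is exactly the standard proof of the lemma on the logarithmic derivative: Poisson--Jensen applied to $\log f$, differentiation, the $R$-dependent estimate $m(r,f'/f)\le K(\log^{+}T(R,f)+\log\frac{R}{R-r}+\log R)$, elimination of $R$ via the Borel growth lemma, specialization to finite order, and the inductive factorization $f^{(k)}/f=(f^{(k)}/f^{(k-1)})(f^{(k-1)}/f)$ together with $T(r,f')\le 2T(r,f)+S(r,f)$. The paper itself does not prove this lemma but simply cites it from \cite{IIpo11}, which gives essentially the same argument you outline, so there is nothing to compare beyond noting the match.
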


\section{Proof of Theorem \ref{thm:fg}}\label{sec:fg}
To prove Theorem \ref{thm:fg}, we first introduce the Steinmetz's Reduction Theorem. 

\begin{thmC}[Steinmetz's Reduction Theorem \cite{GO89, Stem80}]\label{thm:Stein}
Let $F_j, 1\leq j\leq N$ be meromorphic functions on $\mathbb{C}$. Let $h_j, 1\leq j\leq N$ be meromorphic and $g$ be entire on $\mathbb{C}$ such that for each $j$, $$T(r, h_j)=O(T(r, g))$$ as $r\rightarrow\infty$ outside some set of finite measures.
 Given a functional equation of the form $$F_1(g(z))h_1(z)+\cdots+F_N(g(z))h_N(z)=0,$$ then there exist polynomials $p_j$, not all zeros, such that $$p_1(g(z))h_1(z)+\cdots+p_N(g(z))h_N(z)=0.$$ Furthermore, if $h_j\not\equiv 0$ for some $j$, then there exist polynomials $Q_j$, not all zeros, such that $$F_1(z)Q_1(z)+\cdots+F_N(z)Q_N(z)=0.$$
\end{thmC}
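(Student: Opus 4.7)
The plan is to prove Steinmetz's Reduction Theorem by induction on $N$, combining a generalized-Wronskian elimination with the Nevanlinna-theoretic growth estimates recalled in Section~\ref{sec:N}. The case $N=1$ is immediate: $F_1(g)h_1\equiv 0$ with $h_1\not\equiv 0$ forces $F_1\equiv 0$, so the second conclusion holds with $Q_1=1$. For the inductive step I take $N$ minimal and assume, without loss of generality, that $h_N\not\equiv 0$; the goal is then to extract either a nontrivial polynomial relation among the $h_j$'s composed with $g$ or polynomial dependence of the $F_j$'s themselves.

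The core step is to differentiate the relation $\sum_{j=1}^N F_j(g(z))h_j(z)=0$ a total of $N-1$ times, applying the chain rule to each $F_j(g)$. This produces a linear system
\[
\sum_{j=1}^N F_j^{(k)}(g(z))\,H_{j,k}(z)=0,\qquad k=0,1,\dots,N-1,
\]
whose coefficients $H_{j,k}$ are polynomial expressions in the $h_j^{(i)}$ and $g^{(i)}$. Two cases arise. If the $N\times N$ matrix $(H_{j,k})$ is singular, expanding its vanishing determinant yields an explicit nontrivial relation of the form $\sum p_j(g)h_j\equiv 0$, which is the first conclusion. Otherwise Cramer's rule expresses each $F_j(g)$ as a determinantal quotient of the $H_{j,k}$'s; I would then use the Logarithmic Derivative Lemma (Lemma~\ref{lem:LDL}) together with the hypothesis $T(r,h_j)=O(T(r,g))$ to deduce $T(r,F_j(g))=O(T(r,g))$ outside a set of finite measure. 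A classical composition lower bound of Polya type then shows that a transcendental $F_j$ would force $T(r,F_j(g))$ to grow strictly faster than $T(r,g)$, so each $F_j$ must in fact be a polynomial, producing the second conclusion with $Q_j=F_j$ (and recovering the first by direct substitution into the original relation).

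The main obstacle I anticipate is the nondegeneracy of the generalized Wronskian at the minimal $N$: one must argue that the coefficient matrix $(H_{j,k})$ is not identically singular, for otherwise Cramer's rule is unavailable and one must instead re-extract a relation of length strictly less than $N$, triggering the minimality hypothesis in a way that requires separate bookkeeping. A secondary technical nuisance is coordinating the several exceptional sets of finite measure produced by repeated applications of Lemma~\ref{lem:LDL}, so that all of the growth comparisons remain simultaneously valid on an unbounded set of radii; standard finite-measure unions take care of this, but one must be careful in the Polya-type composition step, where the lower bound for $T(r,F_j(g))$ is typically only available after a logarithmic change of variables.
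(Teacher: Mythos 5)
The paper does not prove Theorem~C; it is quoted as a known result from Steinmetz \cite{Stem80} (see also \cite{GO89}), so there is no paper argument to compare yours against. Your proposal, however, has two genuine gaps.

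First, the linear system you write down is not what differentiation produces. Differentiating $\sum_{j=1}^N F_j(g)h_j=0$ a total of $k$ times gives, by the chain and Leibniz rules, a combination of $F_j(g), F_j'(g),\dots,F_j^{(k)}(g)$ for each $j$, not a single term in $F_j^{(k)}(g)$ alone. Thus the $N$ equations you obtain for $k=0,\dots,N-1$ involve all $N^2$ quantities $F_j^{(i)}(g)$, $1\le j\le N$, $0\le i\le N-1$; there is no $N\times N$ matrix $(H_{j,k})$, no determinant to expand, and Cramer's rule cannot be applied. Both branches of your dichotomy (the ``singular determinant'' yielding $\sum p_j(g)h_j\equiv 0$, and the nonsingular branch feeding into a Polya-type growth comparison) rest on a system that does not exist in the form you claim.

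Second, even if some elimination scheme were available, the conclusion you aim for in the nonsingular branch --- that each $F_j$ must itself be a polynomial --- is false in general. Take $F_1=e^z$, $F_2=-e^z$, $h_1=h_2\equiv 1$, and any transcendental entire $g$: the hypotheses of the theorem hold and $F_1(g)h_1+F_2(g)h_2\equiv 0$, yet the $F_j$ are transcendental. The theorem's second conclusion asserts only a nontrivial \emph{polynomial linear dependence} $\sum F_jQ_j\equiv 0$ (here $Q_1=Q_2=1$), which is strictly weaker than polynomiality of the individual $F_j$. Any strategy that would prove the stronger statement must fail; the true argument has to produce a dependence relation and, when combined with the growth hypothesis $T(r,h_j)=O(T(r,g))$, a reduction in $N$, rather than polynomiality of each $F_j$. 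This structural point is worth internalizing before attempting a fix, because it rules out the ``$Q_j=F_j$'' endgame of your proposal outright.
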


\begin{proof}[Proof of Theorem \ref{thm:fg}] We will follow the idea of the proof of Theorem 4 in \cite{HY91}.

Suppose that $f\circ g$ satisfies a nontrivial algebraic differential equation with coefficients in $\mathcal{S}^g$, i.e., there exists a nontrivial differential polynomial $P(z, w, w', \dots, w^{(n)})$ with coefficients in $\mathcal{S}^g$ such that
$$P(z, f\circ g, (f\circ g)', \dots, (f\circ g)^{(n)})=\sum_{j}(M_j(f)\circ g)(H_j(g)(z))=0$$ where $M_j(f)$ is a differential monomial of $f$ with constant coefficients and $H_j(g)(z)$ is a differential polynomial of $g(z)$ whose coefficients are some linear combinations of the coefficients of the original differential polynomial $P(z, w, w', \dots, w^{(n)})$. 

Now, set $F_j(z)=M_j(f)(z)$ and $h_j(z)=H_j(g)(z)$, it follows from the second result of Theorem $\mathrm{C}$ that there exist polynomials $Q_i$, not all zeros, such that$$F_1(z)Q_1(z)+\cdots+F_N(z)Q_N(z)=0.$$
which implies that $f$ satisfies a nontrivial algebraic differential equation with coefficients in $\mathbb{C}(z)$. This is a contradiction to our assumption that $f$ is hypertranscendental over $\mathbb{C}(z)$.

\end{proof} 

\begin{rmk}\label{rmk:fg}Here, we will explain the reason why the Steinmetz's Reduction Theorem does not work for the hypertranscendency of $g\circ f$.
We use the same idea of proof of Theorem \ref{thm:fg}. Suppose that $g\circ f$ satisfies a nontrivial algebraic differential equation over a suitable field such that we can apply the Steinmetz's Reduction Theorem, thus we have  
$$p_1(f(z))h_1(z)+\cdots+p_N(f(z))h_N(z)=0$$
or $$F_1(z)Q_1(z)+\cdots+F_N(z)Q_N(z)=0$$
where $h_j(z)$ is a differential polynomial of $f(z)$ whose coefficients are some linear combinations of the coefficients of the algebraic differential equation $g\circ f$ satisfied, and $F_j(z)$ is a differential monomial of $g$ with constant coefficients.
From these two equalities, we cannot deduce any contradictions even through we have known the hypertranscendency of $f$.
\end{rmk}

 \section{Proof of Theorem \ref{thm:h}}\label{sec:P}
 In this section, we are now going to prove our main result (Theorem \ref{thm:h}). To prove Theorem \ref{thm:h},  we need the following lemmata.


\begin{lem}\label{lem:sm}
Let $f$ be a nonzero meromorphic function on the complex plane and $P(z, y, y', \dots, y^{(n)})$ be a polynomial in $y, y', \dots, y^{(n)}$ whose coefficients are in the field $\mathcal{S}_f$. Suppose $f$ satisfies the equation $P=0$. Rewrite $P=0$ as $P_q=\sum_{j=k}^mP_j$, for some nonnegative integers $q$ and $k(>q)$ such that $P_q\neq 0$ for each $j\geq k$, where $P_j$ is the homogeneous part of $P$ of total degree $j$ in the indeterminates $y, y', \dots, y^{(n)}$. Then for any integer $N$ with $q\leq N\leq k$,  $$m(r, P_q/f^N)=S(r, f).$$ In addition if $q=0$, then $$T(r, f)=N(r, 0, f)+S(r, f).$$
\end{lem}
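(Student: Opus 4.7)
The plan is to exploit the equation $P_q = \sum_{j=k}^{m} P_j$ by writing $P_q/f^N$ in two different ways and then controlling its proximity through a dichotomy on the size of $|f|$. Using homogeneity, I would write $P_q = f^q R_q$ and $P_j = f^j R_j$ where each $R_\nu$ is a polynomial in $f'/f, \ldots, f^{(n)}/f$ with coefficients in $\mathcal{S}_f$; Lemma \ref{lem:LDL} then gives $m(r, R_\nu) = S(r, f)$ for every relevant index $\nu$. The two key expressions are
\[
\frac{P_q}{f^N} \;=\; \frac{R_q}{f^{N-q}} \qquad\text{and}\qquad \frac{P_q}{f^N} \;=\; \sum_{j=k}^{m} f^{j-N} R_j,
\]
the first coming from the definition of $R_q$ and the second from dividing the equation by $f^N$. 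Crucially, $q \leq N \leq k \leq j$ forces $N - q \geq 0$ in the first expression and $j - N \geq 0$ in the second.

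I would then split $[0, 2\pi)$ into $E_1 = \{\theta : |f(re^{i\theta})| \geq 1\}$ and $E_2 = \{\theta : |f(re^{i\theta})| < 1\}$. On $E_1$ the first expression gives $|P_q/f^N| \leq |R_q|$ (the factor $|f|^{N-q}$ in the denominator is at least $1$), while on $E_2$ the second expression gives $|P_q/f^N| \leq \sum_{j=k}^{m} |R_j|$ (each $|f|^{j-N} \leq 1$). Integrating $\log^+$ over $[0, 2\pi)$ and applying the logarithmic derivative bounds to each $R_\nu$ then yields $m(r, P_q/f^N) = S(r, f)$ for every $q \leq N \leq k$.

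For the second conclusion, I would specialize to $N = k$ in the case $q = 0$, which gives $m(r, P_0/f^k) = S(r, f)$. Since $P_0 \in \mathcal{S}_f \setminus \{0\}$ satisfies $T(r, 1/P_0) = S(r, f)$, the identity $1/f^k = (1/P_0)(P_0/f^k)$ gives $m(r, 1/f^k) = S(r, f)$, so $k \cdot m(r, 1/f) = S(r, f)$. The First Main Theorem for $a = 0$ then produces $T(r, f) = T(r, 1/f) + O(1) = m(r, 1/f) + N(r, 0, f) + O(1) = N(r, 0, f) + S(r, f)$.

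The main obstacle I anticipate is the range $q < N \leq k$: using only the first expression gives an estimate of the form $m(r, P_q/f^N) \leq m(r, R_q) + (N-q)\, m(r, 1/f)$, where $m(r, 1/f)$ is precisely the quantity we wish to control in the $q = 0$ corollary, so a straight bound would be circular. The dichotomy in $|f|$ is what breaks the loop, applying the first expression only where $|f| \geq 1$ (so negative powers of $f$ are harmless) and the equation-based expression where $|f| < 1$ (so positive powers of $f$ are harmless).
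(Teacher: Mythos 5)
Your proof is correct and takes essentially the same approach as the paper: both split on $|f|\geq 1$ versus $|f|<1$, using the definition of $P_q$ (equivalently your $R_q$) on the first set and the equation-derived expression $\sum_{j=k}^m f^{j-N}(P_j/f^j)$ on the second, then invoking the Logarithmic Derivative Lemma. Your derivation of the $q=0$ corollary via $N=k$ and $m(r,1/f^k)=k\,m(r,1/f)$ is only a cosmetic variant of the paper's choice $N=1$.
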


\begin{rmk} Lemma \ref{lem:sm} is essentially B.Q. Li's Lemma 4.1 in \cite{Li96}
\end{rmk}

\begin{proof}[Proof of Lemma \ref{lem:sm}] 
Let $P(z, u_0, \dots, u_n)$ be a polynomial in $u_0, \dots, u_n$ with coefficients in $\mathcal{S}_f$. 
Assume that $$I=\{i:=(i_0,i_1,  \dots, i_n)|\ i_j\ \mbox{is  a nonnegative integer and}\ 0\leq j\leq n\}$$ is an index set with finite cardinal numbers. 
Define $$|i|=\sum_{j=0}^ni_j\quad \mbox{and}\quad I_p=\{i\in I: |i|=p\}.$$ 
For each $l\geq q$, let $$P_l=\sum_{i\in I_l}a_i(z)u_0^{i_0}\dots u_n^{i_n}$$
where $a_i\in\mathcal{S}_f$.

Take any point $z\in\mathbb{C}$, we consider several cases. 

\noindent \textbf{Case (i)} $|f(z)|\geq 1$. Since $P_q=\displaystyle\sum_{i\in I_q}a_i(z)u_0^{i_0}\dots u_n^{i_n}$, 
\begin{eqnarray*} \left|\frac{P_q(f, f',  \dots, f^{(n)})}{f^N}(z)\right|
&\leq&\left|\frac{P_q(f,f' \dots, f^{(n)})}{f^q}(z)\right|\\
&\leq& \sum_{i\in I_q}\left|a_i(z)\frac{f^{i_0}(f')^{i_1}\cdots (f^{(n)})^{i_m}}{f^q}\right|:=G_1(z).
\end{eqnarray*}

\noindent \textbf{Case (ii)} $|f(z)|\leq 1$. Then by $P_q=\displaystyle\sum_{j=k}^mP_j, q\leq N\leq k$, we have 
\begin{eqnarray*} \left|\frac{P_q(f, f',  \dots, f^{(n)})}{f^N}(z)\right|
&=&\left|\sum_{j=k}^m\frac{P_j(f,f' \dots, f^{(n)})}{f^j}(z)f^{j-N}\right|\\
&\leq&\sum_{j=k}^m \left|\frac{P_j(f,f' \dots, f^{(n)})}{f^j}(z)\right| |f|^{j-N}\\
&\leq& \sum_{j=k}^m\sum_{i\in I_j}\left|a_i(z)\frac{f^{i_0}(f')^{i_1}\cdots (f^{(n)})^{i_m}}{f^j}\right|:=G_2(z).
\end{eqnarray*}
Combining the above results, we see that in any case $$\left|\frac{P_q(f, f',  \dots, f^{(n)})}{f^N}(z)\right|\leq G_1(z)+G_2(z)$$ for any $z\in\mathbb{C}$. By the well-known Logarithmic Derivative Lemma and $a_i\in\mathcal{S}_f$, we deduce that $$m(r, P_q/f^N)\leq m(r, G_1+G_2)=S(r, f).$$
Now if $q=0$, then by taking $N=1$, we have $$m(r, 1/f)\leq m(r, P_0/f)+m(r, 1/P_0)+O(1)=S(r, f)$$ as $T(r, P_0)=S(r, f)$. Hence the result follows from the First Main Theorem of Nevanlinna theory. 
\end{proof}

As a consequence, one can also obtain the following lemma first proved by A. Mohon'ko in 1982. 
\begin{lem}[\cite{Mohon82}]\label{thm:C} Let $f$ be a transcendental meromorphic solution of an algebraic differential equation $P(y)=P(z, y, y', \dots, y^{(k)})=0$ with coefficients in $\mathcal{S}_f$. If a meromorphic function $\phi$ with $T(r, \phi)=S(r, f)$ does not solve $P(z, y, y', \dots, y^{(k)})=0$ i.e. $P(z, \phi, \phi', \dots, \phi^{(k)})\not\equiv 0$, then $$m\left(r, \frac{1}{f-\phi}\right)=S(r, f)$$
\end{lem}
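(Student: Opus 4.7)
The plan is to reduce the statement to the special case $q=0$ of Lemma \ref{lem:sm} via a translation by $\phi$. First I would set $g=f-\phi$; since $T(r,\phi)=S(r,f)$ and derivatives of small functions remain small (by the Logarithmic Derivative Lemma, $T(r,\phi^{(j)})\le (j{+}1)T(r,\phi)+S(r,\phi)$), one gets $T(r,g)=T(r,f)+S(r,f)$, so in particular $g$ is transcendental and $\mathcal{S}_f=\mathcal{S}_g$. The goal then becomes proving $m(r,1/g)=S(r,f)$.

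Next I would substitute $y^{(j)}=g^{(j)}+\phi^{(j)}$ into $P=0$ and expand. Writing
\[
P(z,u_0,\ldots,u_k)=\sum_{I}a_I(z)\,u_0^{i_0}\cdots u_k^{i_k}, \qquad a_I\in\mathcal{S}_f,
\]
the binomial expansion produces a new polynomial
\[
\widetilde P(z,v_0,\ldots,v_k):=P\bigl(z,v_0+\phi,v_1+\phi',\ldots,v_k+\phi^{(k)}\bigr)
\]
in $v_0,\ldots,v_k$ whose coefficients are polynomial expressions in $\phi,\phi',\ldots,\phi^{(k)}$ over $\mathcal{S}_f$, and hence still lie in $\mathcal{S}_f=\mathcal{S}_g$. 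By construction $\widetilde P(z,g,g',\ldots,g^{(k)})=0$, and the degree-$0$ part of $\widetilde P$ in $(v_0,\ldots,v_k)$ is exactly $\widetilde P_0=P(z,\phi,\phi',\ldots,\phi^{(k)})$, which is nonzero by hypothesis. Since $g$ is transcendental, $\widetilde P=0$ cannot reduce to $\widetilde P_0=0$, so some homogeneous part of positive degree is nonzero; this verifies the structural hypothesis of Lemma \ref{lem:sm}.

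The endgame is to apply Lemma \ref{lem:sm} to $\widetilde P=0$ with $q=0$, yielding
\[
T(r,g)=N(r,1/g)+S(r,g)=N(r,1/g)+S(r,f).
\]
Combining with the First Main Theorem $T(r,1/g)=T(r,g)+O(1)$ then gives
\[
m(r,1/g)=T(r,g)-N(r,1/g)+O(1)=S(r,f),
\]
which is precisely the desired $m(r,1/(f-\phi))=S(r,f)$. The only real obstacle is the bookkeeping needed to confirm that every coefficient of $\widetilde P$ stays in $\mathcal{S}_f$, i.e.\ that the finitely many derivatives $\phi^{(j)}$ for $j\le k$ remain small relative to $f$; once this is in place the argument is a clean specialization of Lemma \ref{lem:sm} to the constant-term case.
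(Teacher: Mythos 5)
Your proposal is correct and follows essentially the same approach as the paper's proof: shift by $\phi$ to get $g=f-\phi$, rewrite $P(f)=0$ as a differential equation in $g$ with coefficients in $\mathcal{S}_f=\mathcal{S}_g$ whose degree-zero part is $P(\phi)\not\equiv 0$, and apply Lemma \ref{lem:sm} with $q=0$. (Your appeal to $g$ being transcendental when arguing that some positive-degree homogeneous part is nonzero is superfluous — the correct reason is simply that $\widetilde P_0=P(\phi)\not\equiv 0$ while $\widetilde P(g)\equiv 0$ — but this does not affect the validity of the argument.)
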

\begin{proof} Let $g=f-\phi$, then $T(r, g)=T(r, f)+S(r, f)$. Since $P(f)\equiv 0$, we have $$P(f)=P(g+\phi)=Q(g)+P(\phi)\equiv 0$$ where $Q$ is a differential polynomial over $\mathcal{S}_f$ with lowest degree at least one, as $T(r, \phi)=S(r, f)$. The result follows immediately from Lemma \ref{lem:sm} as $P(\phi)\not\equiv 0$.
\end{proof}
\begin{lem}[\cite{Clunie70}]\label{lem:Cl}
Let $f$ be a transcendental entire function and let $g$ be a transcendental meromorphic function in the complex plane, then $T(r, f)=o(T(r, g\circ f))$ as $r\rightarrow\infty$.
\end{lem}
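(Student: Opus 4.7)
The plan is to establish a quantitative form of Clunie's inequality, namely
\begin{equation*}
T(r, g\circ f) \;\ge\; c\,T\!\bigl(c\,M(r/2, f),\, g\bigr) - O(1)
\end{equation*}
for some absolute $c\in(0,1)$ and all sufficiently large $r$. Granted this, the conclusion $T(r,f)=o(T(r, g\circ f))$ is immediate: the transcendence of $g$ forces $T(s,g)/\log s\to\infty$ as $s\to\infty$; the transcendence of the entire function $f$ forces $M(r/2, f)$ to grow faster than any power of $r$; and together with the standard bound $T(r,f)\le \log^+ M(r,f)$ for entire $f$, these show the right side dwarfs any prescribed multiple of $T(r,f)$.

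To choose a useful target value, I would invoke the Nevanlinna defect relation $\sum_{a}\delta(a,g)\le 2$ to pick a finite $a\in\mathbb{C}$ with $\delta(a,g)\le 1/2$; then $N(r, 1/(g-a))\ge \tfrac{1}{3}T(r,g)$ for all large $r$, and the zeros $w_1,w_2,\ldots$ of $g-a$ form an infinite sequence tending to infinity. Since every zero of $g\circ f - a$ in $|z|\le r$ is an $f$-preimage of some $w_k$ (with matching multiplicities), one has
\begin{equation*}
N\bigl(r, 1/(g\circ f - a)\bigr) \;=\; \sum_k N(r, w_k, f).
\end{equation*}

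The heart of the proof is then a uniform covering estimate for $f$: for every $w$ with $|w|\le c\,M(r/2, f)$ (some absolute $c\in(0,1)$), $N(r, w, f)\ge\tfrac{1}{2}T(r,f)$ for all large $r$. Summing this over the $w_k$ with $|w_k|\le cM(r/2, f)$ and converting $n$-bounds to $T$-bounds on concentric annuli via the First Main Theorem applied to $g-a$, one obtains
\begin{equation*}
\sum_k N(r, w_k, f) \;\ge\; \tfrac{1}{2}T(r,f)\cdot n\bigl(cM(r/2, f),\, 1/(g-a)\bigr) \;\ge\; c'\,T\bigl(cM(r/2,f),\, g\bigr).
\end{equation*}
Combined with $T(r, g\circ f)\ge N(r, 1/(g\circ f - a)) - O(1)$ from the First Main Theorem, this delivers the target inequality.

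The main obstacle is the uniform covering estimate. It is a classical fact, following either from Ahlfors' theory of covering surfaces or from a careful application of Jensen's formula to $f-w$: if $f-w$ were zero-free on $|z|\le r$, Jensen's identity would force $\log|f(0)-w| = \tfrac{1}{2\pi}\int_0^{2\pi}\log|f(re^{i\theta})-w|\,d\theta$, which for $|w|$ substantially less than $M(r,f)$ contradicts the large contribution from the subarc where $|f|$ attains its maximum. Making this quantitative---so that sufficiently many preimages are produced for each admissible $w$, uniformly in $r$---is the technical crux. The cleanest route is to first establish the estimate along a sequence of radii of regular growth, e.g.\ Pólya peaks of $T(r,f)$, and then interpolate to all large $r$ using the monotonicity of the maximum modulus and of the counting functions.
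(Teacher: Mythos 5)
The paper offers no proof of this lemma at all --- it is quoted verbatim from Clunie's 1970 paper --- so your argument must stand on its own, and it has a fatal gap exactly where you locate ``the heart of the proof.'' The uniform covering estimate you propose, namely that $N(r,w,f)\ge \tfrac12 T(r,f)$ for \emph{every} $w$ with $|w|\le c\,M(r/2,f)$, is false. Take $f(z)=e^z$ and $w=0$: then $|w|\le c\,M(r/2,f)$ and $T(r,f)=r/\pi$, yet $N(r,0,f)=0$ for all $r$; more generally any Picard or deficient value of $f$ of small modulus defeats the estimate, and nothing prevents the zeros $w_k$ of $g-a$ from accumulating near such a value. Your Jensen-formula justification is also incorrect: if $f-w$ is zero-free on $|z|\le r$, then $\log|f-w|$ is \emph{harmonic} there, and Jensen's identity $\log|f(0)-w|=\tfrac{1}{2\pi}\int_0^{2\pi}\log|f(re^{i\theta})-w|\,d\theta$ is just the mean-value property; the large positive contribution from the arc where $|f|\approx M(r,f)$ is balanced by arcs where $|f-w|$ is exponentially small (again $f=e^z$, $w=0$ is the model), so no contradiction arises. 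Any correct proof must tolerate an exceptional set of targets: the classical route groups the $w_k$ into triples and applies the second fundamental theorem (or Ahlfors' theory of covering surfaces) to $f$ with each triple as targets, so that each triple contributes roughly $T(r,f)$ to $\sum_k N(r,w_k,f)$; the genuine difficulty, which your sketch does not address, is keeping the error term uniform over the unboundedly many triples used as $r\to\infty$.

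Two secondary soft spots. First, $\delta(a,g)\le\tfrac12$ only yields $N(r,a,g)\ge\tfrac13 T(r,g)$ along a \emph{sequence} of radii (the deficiency is $1-\limsup N/T$), whereas your argument needs such a bound at the radii $s=c\,M(r/2,f)$ for all large $r$; the standard fix is to work with three fixed values and the second fundamental theorem for $g$. Second, in the concluding deduction your lower bound is a large multiple of $\log M(r/2,f)\ge T(r/2,f)$, while the comparison bound $T(r,f)\le\log^+M(r,f)$ lives at radius $r$; since $T(r,f)/T(r/2,f)$ need not be bounded for entire functions of irregular growth, the asserted conclusion $T(r,f)=o(T(r,g\circ f))$ does not follow without an extra regularization step. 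These two are repairable; the pointwise covering estimate is not, and the proof must be restructured around an averaged or exceptional-set version of it.
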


\begin{proof}[Proof of Theorem \ref{thm:h}]
 \ref{t1}). Without loss of generality, we can assume $R(z)=z$, since if $f$ is hypertranscendental over $\mathcal{S}$, it is easy to show that $R\circ f$ is also hypertranscendental over $\mathcal{S}$. 

Suppose $g(z)-z=0$ has $d$ roots, then $g(z)-z=Q(z)A(z)$ where $Q$ is a polynomial with degree $d$, and $A$ is a transcendental meromorphic function which is nowhere zero. Hence 
 if $f$ is an entire function, we have  $$N(r, 0, g\circ f- f)=N(r, 0, Q(f)A(f))=N(r, 0, Q(f))\leq dT(r, f)+S(r, f).$$
By  Lemma \ref{lem:Cl}, we have $T(r, f)=o(T(r, g\circ f))$. Suppose $g\circ f$ is not hypertranscendental over $\mathcal{S}$, that is, $g\circ f$ is a solution of an algebraic differential equation $P(z, y, y', \dots, y^{(k)})=0$ with coefficients in $\mathcal{S}$ (hence in $\mathcal{S}_{g\circ f}$ as well). 
 By Lemma \ref{thm:C} and the assumption that $f$ is hypertranscendental over $\mathcal{S}$, we have $$m\left(r, \dfrac{1}{g\circ f- f}\right)=S(r, g\circ f).$$ 
By the First Main Theorem of Nevanlinna Theory for small functions \cite{Mohon82},
\begin{eqnarray*}
T(r, g\circ f)&=&T(r, g\circ f-f)+S(r, g\circ f)\\
&=&m(r, 0, g\circ f-f)+N(r, 0, g\circ f- f)+S(r, g\circ f)\\
&\leq& S(r, g\circ f)+dT(r, f)=S(r, g\circ f)
\end{eqnarray*} which is a contradiction. This completes the proof of the first part. 
 
 \ref{t2}). If $a\not\equiv 0$, since $f$ is hypertranscendental over $\mathcal{S}$, it is easy to show that $af$ is also hypertranscendental over $\mathcal{S}$, as $a\in\mathcal{S}$.
 
Since $T(r, f)=S(r, g), T(r, a)=S(r, f'/f)=S(r, f)$, one can obtain that $T(r, af)=T(r, f)+S(r, f)=S(r, fg).$

Suppose $fg$ is not hypertranscendental over $\mathcal{S}$, that is, $fg$ is a solution of an algebraic differential equation $P(z, y, y', \dots, y^{(k)})=0$ with coefficients in $\mathcal{S}$ (hence in $\mathcal{S}_{fg}$ also). By Lemma \ref{thm:C} and the hypertranscendence of $af$ over $\mathcal{S}$ , we have $$m\left(r, \dfrac{1}{fg-af}\right)=S(r, fg)=S(r, g).$$ 
On the other hand, by the First Main Theorem of Nevanlinna Theory for small functions, as $T(r, af)=S(r, fg)$,
 \begin{eqnarray*}
T(r, fg)&=&T(r, fg-af)+S(r, fg)\\
&=&m(r, 0, fg-af)+N(r, 0, fg - af)+S(r, fg)\\
&\leq& N(r, 0, g-a)+N(r, 0, f)+S(r, g)\\
&=& N(r, 0, g-a)+S(r, g). 
\end{eqnarray*} Since $T(r, fg)=T(r, g)+S(r, g)$, it follows that $T(r, g)=N(r, a, g)+S(r, g)$ which is a contradiction to the assumption that $\delta(a, g)>0$.
 
\ref{t2.1}). If $f+g \in A(\mathcal{S})$, so does $f^{(k)}+g^{(k)}$, that is, there exists a nontrivial algebraic differential equation $P(z, y, y', \dots, y^{(n)})=0$ over $\mathcal{S}$ such that $$P(z, f^{(k)}+g^{(k)}, f^{(k+1)}+g^{(k+1)}, \dots, f^{(k+n)}+g^{(k+n)})\equiv 0.$$ Set $$Q(z, g^{(k)}, g^{(k+1)}, \dots, g^{(n+k)}):=P(z, f^{(k)}+g^{(k)}, f^{(k+1)}+g^{(k+1)}, \dots, f^{(k+n)}+g^{(k+n)}),$$ 
then $Q(z, g^{(k)}, g^{(k+1)}, \dots, g^{(n+k)})\equiv 0$. It is easy to check that all the Nevanlinna characteristic functions of the coefficients of $Q(z, g^{(k)}, g^{(k+1)}, \dots, g^{(n+k)})$ are $S(r, g^{(k)})$, as $T(r, f)=S(r, g^{(k)})$ and $T(r, f^{(k)})\leq (k+1)T(r, f)+S(r, f)$

On the other hand,  since $f$ is hypertranscendental over $\mathcal{S}$, so is $f^{(k)}+a$ for any $a\in\mathcal{S}$, hence $$Q(z, a, a' , \dots, a^{(n)})=P(z, f^{(k)}+a, f^{(k+1)}+a', \dots, f^{(k+n)}+a^{(n)})\not\equiv 0.$$ By Lemma \ref{thm:C}, we have $$m\left(r, \frac{1}{g^{(k)}-a}\right)=S(r, g^{(k)})$$ which is a contradiction to the assumption that $\delta(a, g^{(k)})>0$ for some $a\in\mathcal{S}$. 

\ref{t3}).  If $fe^g\in A(\mathcal{S})$, then clearly, $\dfrac{f'}{f}+g'=\dfrac{(fe^g)'}{fe^g}\in A(\mathcal{S})$, and hence so does $\left(\dfrac{f'}{f}\right)^{(k)}+g^{(k+1)}$ for any nonnegative integer $k$, 
that is, there exists an algebraic differential equation $P(z, y, y', \dots, y^{(n)})=0$ over $\mathcal{S}$ such that 
$$P\left(z, \left(\dfrac{f'}{f}\right)^{(k)}+g^{(k+1)}, \left(\dfrac{f'}{f}\right)^{(k+1)}+g^{(k+2)}, \dots,\left(\dfrac{f'}{f}\right)^{(k+n)}+g^{(k+n+1)}\right)\equiv 0.$$ 
Set $$Q(z, g^{(k)}, g^{(k+1)}, \dots, g^{(n+k+1)}):=P\left(z, \left(\dfrac{f'}{f}\right)^{(k)}+g^{(k+1)},  \dots,\left(\dfrac{f'}{f}\right)^{(k+n)}+g^{(k+n+1)}\right),$$ then $$Q(z, g^{(k)}, g^{(k+1)}, \dots, g^{(n+k+1)})\equiv 0$$ 
and all the Nevanlinna characteristic functions of the coefficients of $Q(z, g^{(k)}, g^{(k+1)}, \dots,$ $g^{(n+k+1)})$ are $S(r, g^{(k)})$ from $$T(r, f'/f)=S(r, g^{(k)})$$ and $$T(r, (f'/f)^{(j)})\leq(j+1)T(r, f'/f)+S(r, f'/f)$$ for any nonnegative integer $j$.

On the other hand, since $f$ is hypertranscendental over $\mathcal{S}$, so is $(f'/f)^{(k)}$ for any nonnegative integer $k$, and hence so is $(f'/f)^{(k)}+a'$ for any $a\in\mathcal{S}$. Therefore, $$Q(z, a, a' ,\dots, a^{(n+1)})=P\left(z, \left(\dfrac{f'}{f}\right)^{(k)}+a', \left(\dfrac{f'}{f}\right)^{(k+1)}+a'', \dots,\left(\dfrac{f'}{f}\right)^{(k+n)}+a^{(n+1)}\right)\not\equiv 0.$$ By Lemma \ref{thm:C}, we have $$m\left(r, \frac{1}{g^{(k)}-a}\right)=S(r, g^{(k)})$$ which is a contradiction to the inequality (\ref{eqn:ng}).


 \ref{t4}).  Let $$P(z, u_0, u_1, \dots, u_n)=\sum_{i=0}^mP_i(z, u_0, u_1, \dots, u_n)$$ be a distinguished polynomial over $\mathcal{S}$, where $P_i(z, u_0, u_1, \dots, u_n)$ contains only one term $a_i(z)u_0^{i_0}u_1^{i_1}\cdots u_n^{i_n}$ with coefficient $a_i\in\mathcal{S}$ and $i=i_0+i_1+\cdots+i_n$. 

We first notice that the assumption $f\in\mathcal{S}_{\exp(g)}$ and Lemma \ref{lem:Cl} imply that 
\begin{eqnarray*} T\left(r, \dfrac{(fe^g)'}{fe^g}\right)
&=&T\left(r, \dfrac{f'}{f}+g'\right)\\
&\leq&2T(r, f)+S(r, f)+2T(r, g)+S(r, g)=S(r, fe^g)
\end{eqnarray*}

 Assume to the contrary that $P(z, fe^g, (fe^g)', \dots, (fe^g)^{(n)})\equiv 0$. Let $q$ be a nonnegative integer such that $a_q\neq 0$ and $a_j\equiv 0, j=0, 1, \dots, q-1$.  Applying Lemma \ref{lem:sm} to $N=q+1$,  one can conclude that $$m(r, P_q/(fe^g)^{q+1})=S(r, fe^g)$$ hence $m(r, 1/(fe^g))=S(r, fe^g)$ as $T(r, P_q/(fe^g)^q)=S(r, fe^g)$. However, $m(r, 0, fe^g)+N(r, 0, fe^g)=S(r, fe^g)+N(r, 0, f)\leq T(r, f)=S(r, fe^g)$, which is impossible, thus $a_q\equiv 0$. Repeating the above argument, one can obtain that $a_i\equiv 0$ for all $i=0, 1, \dots, m$. Hence the result follows.

This completes the proof of Theorem \ref{thm:h}.
\end{proof}

\section{Proof of Theorem \ref{cor:Bank} and \ref{cor:Bank2}}\label{sec:12}
In this section, we will prove Theorem \ref{cor:Bank} and \ref{cor:Bank2} by using Theorem \ref{thm:h}.
\begin{proof}[Proof of Theorem \ref{cor:Bank}]
 This follows immediately from part (\ref{t3}) of Theorem \ref{thm:h} and the fact that $T(r, \Gamma'/\Gamma)=r+o(r)$ in \cite{Bank76}.
 \end{proof}
 
\begin{proof}[Proof of Theorem \ref{cor:Bank2}]
 We consider the following two cases.

\noindent\textbf{Case 1}. If $\rho(e^h)<\infty$, then $\Gamma e^h$ is hypertranscendental over $\mathcal{M}_0$ (see p.271 of \cite{Bank80}). Actually, in this case, $h$ is a polynomial, hence it is not hard to see that $e^h\in A(\mathcal{M}_0)$ as $h'=(e^h)'/e^h$. If $\Gamma e^h\in A(\mathcal{M}_0)$, one can conclude that $\Gamma\in A(\mathcal{M}_0)$ which is a contradiction to the hypertranscendence of $\Gamma$ over $\mathcal{M}_0$.

\noindent\textbf{Case 2}. If $\rho(e^h)=\infty$, then $\Gamma\in\mathcal{S}_{\exp(h)}$, hence the result follows immediately from Theorem \ref{thm:h}(\ref{t4}).
\end{proof}

\section*{Acknowledgements} The first author was partially supported by a graduate studentship of HKU and the RGC grant 1731115. The second author was partially supported by the RGC grant 1731115. The authors are very grateful to the referee for the valuable suggestions.



\bibliographystyle{abbrv}
  \bibliography{mybibfile}





\end{document}